\documentclass[11pt]{article}

\usepackage[utf8]{inputenc}
\usepackage[T1]{fontenc}
\usepackage[english]{babel}
\usepackage{amsmath,amssymb, amsthm}
\usepackage{enumitem}
\usepackage{orcidlink}
\usepackage{authblk}

\definecolor{ForestGreen}{rgb}{0.15,0.416,0.18}
\definecolor{EgyptBlue}{rgb}{0.063,0.2,0.65}

\usepackage[noadjust]{cite}
\usepackage{hyperref}

\hypersetup{colorlinks,unicode,
	linkcolor=EgyptBlue,         
	citecolor=EgyptBlue,          
	urlcolor=ForestGreen          
}

\usepackage{geometry}
\usepackage{mathtools}

\newcommand{\doi}[1]{\url{https://doi.org/#1}}

\numberwithin{equation}{section}

\usepackage[capitalize, noabbrev]{cleveref}

\newtheorem{theorem}{Theorem}
\newtheorem{lemma}[theorem]{Lemma}
\newtheorem{proposition}[theorem]{Proposition}
\newtheorem{corollary}[theorem]{Corollary}

\theoremstyle{definition}
\newtheorem{example}[theorem]{Example}

\numberwithin{theorem}{section}
\numberwithin{equation}{section}

\newcommand{\R}{\mathbb{R}}
\newcommand{\N}{\mathbb{N}}

\newcommand{\PP}{\bar{P}^\ast}
\newcommand{\cc}{c^*}
\makeatletter
\newcommand{\subjclass}[2][2020]{%
  \let\@oldtitle\@title%
  \gdef\@title{\@oldtitle\footnotetext{#1 \emph{MSC Classification:} #2}}%
}
\newcommand{\keywords}[1]{%
  \let\@@oldtitle\@title%
  \gdef\@title{\@@oldtitle\footnotetext{\emph{Keywords:} #1}}%
}
\makeatother
\title{Oscillation results for first order neutral delay differential equations with several positive and negative coefficients}

\date{\today}
\author[1,2,3]{\orcidlink{0000-0001-9693-1923}\,Ábel Garab}

\author[1,3]{\orcidlink{0009-0009-3071-020X}\,Gergő Tóth}
\keywords{oscillation, neutral equation, positive and negative terms, several delays, variable delays.}

\subjclass{34K11, 34K40.}

\affil[1]{Bolyai Institute, University of Szeged,\par Aradi vértanúk tere 1, Szeged, H–6720, Hungary}

\affil[2]{HUN-REN--SZTE Analysis and Applications Research Group, \par Bolyai Institute,
University of Szeged}

\affil[3]{National Laboratory for Health Security, University of Szeged, Szeged, Hungary}

\begin{document}

\maketitle

\begin{abstract}
    We provide sufficient criteria for the oscillation of all solutions of neutral delay differential equations of the form 
    \[
    \left[x(t) - \sum_{i=1}^{N_r}R_i(t)x(t - r_i(t)) \right]' + \sum_{i=1}^{N_p}P_i(t)x(t - \tau_i(t)) - \sum_{i=1}^{N_q}Q_i(t)x(t - \delta_i(t))=0,
    \] 
    with both positive and negative terms and time-variable delays. 
    Our results improve and generalize several existing criteria available in the literature that address restricted cases, such as constant delays or the absence of negative coefficients. Under additional assumptions on slowly varying parameters, we derive sharper oscillation conditions. We  demonstrate the applicability of our findings through illustrative examples.
\end{abstract}

\section{Introduction}
In this work, we consider linear neutral delay differential equations (NDDEs) with several variable delays of the form
\begin{equation}\label{ndde_var_delays}
    \left[x(t) - \sum_{i=1}^{N_r}R_i(t)x(t - r_i(t)) \right]' + \sum_{i=1}^{N_p}P_i(t)x(t - \tau_i(t)) - \sum_{i=1}^{N_q}Q_i(t)x(t - \delta_i(t))=0, 
\end{equation}
for $t\geq t_0$, where $R_i,\ P_j,\ Q_k,\ r_i,\ \tau_j$, and $\delta_k$ are continuous nonnegative functions, $R_i,\ \tau_j,\ \delta_k\in C^1 $ and $r_i,\ \tau_j,\ \delta_k \leq D\in(0,\infty)$ on the interval $[t_0,\infty)$ for all $1\leq i\leq N_r, \ 1\leq j \leq N_p, \ 1\leq k \leq N_q $, and  $N_q\leq N_p$. Furthermore, we assume that $\delta_i(t)\leq\tau_i(t) $ and   $\delta'_i(t)<1$ for all $1\leq i \leq N_q$ and $t\in(t_0,\infty)$. 

Delay differential equations (DDEs) model systems where the current rate of change depends not only on the present state but also on past states. Among these, NDDEs form a subset in which the delayed term also appears in the derivative.

NDDEs arise in many applications in biology \cite{gopalsamy:he:wen:91,baker:bocharov:paul:rihan:98,huang:ding:wang:25,huang:ding:wang:25-positive,kuang:93}, in physics and engineering \cite{kadar:stepan:23, zhang:stepan:18, rasvan:22, rasvan:73, rasvan:19}, and even in economics \cite{garab:kovacs:krisztin:16,matsumoto:szidarovszky:10,balazs:krisztin:19}, to mention only a few examples.

For a thorough introduction to the theory of DDEs, we refer to the books \cite{hale:verduyn-lunel:93} and \cite{diekmann:van-gils:verduyn-lunel:walther:95}. For a more applied approach, we recommend that the reader study \cite{Erneux:09} and \cite{Smith:11}.

The oscillation theory of DDEs has received considerable attention in the last sixty to seventy years. The monographs \cite{gyori1991, Erbe1995, agarwal2004} yield a nice overview of the main contributions to the field from the last century. Each of these books devotes a complete chapter to oscillation  and nonoscillation criteria for NDDEs.

We say that a function is a solution of \eqref{ndde_var_delays} if it is continuous on $[t_0-D,\infty)$, continuously differentiable on the interval $(t_0,\infty)$, and satisfies \eqref{ndde_var_delays} there. A solution of a scalar differential equation oscillates if it has arbitrarily large zeros. A differential equation oscillates if every solution of the equation oscillates. A solution $x$ of \eqref{ndde_var_delays} is eventually positive if there exists a $T>t_0$ such that $x(t)>0$ for all $t>T$. A  linear equation is oscillatory if and only if it has no eventually positive solution.

Several studies have addressed oscillation criteria for special cases of equation \eqref{ndde_var_delays}. For instance, \cite[Section 6.9]{gyori1991} considers time-variable delays and coefficient functions, but no negative terms are allowed (i.e. $Q_k\equiv 0$ for all $k$), while \cite[Sections 3.3--3.5]{agarwal2004} involves both a single positive and a negative coefficient function (more precisely, $N_r=N_p=N_q=1$), and constant delays. 

There is still considerable progress related to the oscillation properties of equation \eqref{ndde_var_delays}. For instance, \cite{attia:24} studies the distance between consecutive zeros of solutions of similar equations; there are recent papers on the oscillation of dynamic equations \cite{grace:negi:abbas:22} and nonlinear equations of a similar form \cite{pinelas:santra:18}. However, very little is known about the oscillation of \eqref{ndde_var_delays} in its full generality. The aim of this work is to fill this gap.
We mention here the results of \cite{parhi:chand:00,luo:shen:04,elabbasy:hassan:saker:07} that allow both positive and negative coefficient functions, but only constant delays. Their approach and the obtained oscillation criteria are not comparable to ours. 

Our first main result (\Cref{c_thm:ndde-rdde}) generalizes the results of Győri and Ladas \cite[Section~6.9]{gyori1991}. It establishes that the NDDE \eqref{ndde_var_delays} is oscillatory, provided a corresponding first-order, non-neutral DDE is oscillatory. We combine this with classical oscillation criteria for the latter equation to obtain explicit criteria for the oscillation of \eqref{ndde_var_delays}. Our second main result (\Cref{thm:agar}) is based on a comparison-type result (\Cref{thm:eq-ineq}) and an iterative technique. It yields a generalization of a result by Agarwal, Bohner, and Li \cite[Theorem 3.4.9] {agarwal2004} for multiple constant delays. We demonstrate that both main theorems can be sharpened when a property related to slow variation is fulfilled (see \Cref{cor:slow,cor:slow-osc}). 

The rest of the paper is structured as follows. In the next section, we clarify some further notations and prove two auxiliary results that are used throughout the paper. \Cref{sec:general-results} contains our main results related to the general case, when the delays may be time-variable. \Cref{sec:constant-results} is devoted to our results related to the case of constant delays. We conclude the paper with some illustrative examples in \Cref{sec:examples}, where we also discuss the relation of the obtained oscillation criteria.

\section{Preliminaries} 
In this section, we set some notations and prove two auxiliary results. As customary, let $\N$ and $\R$ denote the set of nonnegative integers and reals, respectively. 

The next simple proposition is needed for \Cref{c_lem:y_pos}, which plays a key role in the proof of all our results.

\begin{proposition}\label{prop:c_def}
    Assume that $\tau$ and $\delta$ are both continuously differentiable functions on $[t_0,\infty)$ with $0 \leq \delta(t)\leq \tau(t)\leq D$ for some positive constant $D$, such that $\delta'(t)<1$ for all $t\geq t_0$. Then for any $t\geq t_0$, there exists a unique $c(t)\in [0,D]$ such that
    \begin{equation}\label{c_def}
        c(t)=\tau(t)-\delta(t-c(t)).    
    \end{equation}
    Moreover, the function $t\mapsto c(t)$ is differentiable and fulfills
    \begin{equation}      \label{c'}
        1-c'(t) = \frac{1-\tau'(t)}{1-\delta'(t-c(t))}.
    \end{equation}
\end{proposition}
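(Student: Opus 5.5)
The plan is to obtain $c(t)$ as the unique zero of a strictly increasing auxiliary function, and then get differentiability and \eqref{c'} from the implicit function theorem. Fix $t\geq t_0$ and define
\[
g_t(s) = s + \delta(t-s) - \tau(t), \qquad s\in[0,D],
\]
so that \eqref{c_def} holds exactly when $g_t(c(t))=0$. For $t-s<t_0$ we need $\delta$ on $[t_0-D,t_0]$. I will assume, as is implicit in the statement, that $\delta$ is defined and $C^1$ there with $\delta'<1$; alternatively one restricts to $t\geq t_0+D$. This domain issue is the only real obstacle, and it is a matter of convention rather than difficulty.

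\emph{Existence and uniqueness.} Since $g_t'(s)=1-\delta'(t-s)>0$, the function $g_t$ is strictly increasing, so it has at most one zero. For existence, evaluate at the endpoints of $[0,\tau(t)]$. At the left end, $g_t(0)=\delta(t)-\tau(t)\leq 0$ because $\delta\leq\tau$. At the right end, $g_t(\tau(t))=\delta(t-\tau(t))\geq 0$ because $\delta\geq 0$. Since $g_t$ is continuous, the intermediate value theorem gives a zero $c(t)\in[0,\tau(t)]\subseteq[0,D]$, and by strict monotonicity it is the unique zero in $[0,D]$.

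\emph{Differentiability.} Set $F(t,c)=c-\tau(t)+\delta(t-c)$. This function is $C^1$ in both variables, and
\[
\partial_c F(t,c)=1-\delta'(t-c)>0 .
\]
Hence the implicit function theorem yields, near each $t$, a $C^1$ solution branch $t\mapsto\tilde c(t)$ of $F(t,\tilde c(t))=0$. By the uniqueness already proved, this branch coincides with $c(t)$, so $c$ is $C^1$.

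\emph{Formula \eqref{c'}.} Differentiating $c(t)=\tau(t)-\delta(t-c(t))$ gives
\[
c'(t)=\tau'(t)-\delta'(t-c(t))\bigl(1-c'(t)\bigr).
\]
Subtracting both sides from $1$ yields
\[
\bigl(1-c'(t)\bigr)\bigl(1-\delta'(t-c(t))\bigr)=1-\tau'(t).
\]
Dividing by the positive factor $1-\delta'(t-c(t))$ gives \eqref{c'}.
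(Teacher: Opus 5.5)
Your proposal is correct and takes essentially the same route as the paper: an intermediate-value argument for the strictly increasing map $s\mapsto s-\tau(t)+\delta(t-s)$, then the implicit function theorem and differentiation of the defining identity. The differences are minor: the paper brackets the zero on $[0,D]$ where you use $[0,\tau(t)]$, which additionally gives $c(t)\le\tau(t)$. Your remarks that $\partial_c F=1-\delta'(t-c)$ and that $\delta$ must be defined on $[t_0-D,t_0]$ (or one restricts to $t\ge t_0+D$) are slightly more careful than the paper's written proof.
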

\begin{proof}
    Define the following continuous function
    \begin{equation}\label{F_def}
        F\colon \R\times [t_0,\infty) \to \R, \qquad          F(c,t)\coloneqq c-\tau(t)+\delta(t-c).
    \end{equation}
    Since $F(0,t)\leq0$, $F(D,t)\geq0$, and
    \[
        \frac{\partial F}{\partial c}(c,t)=1-\delta'(t) > 0,
    \]
    hence for any $t\geq t_0$, there exists a unique $c\in [0,D]$ satisfying \eqref{c_def}. Applying the implicit function theorem, we obtain that $c$ is differentiable and $F(c(t),t)=0$. Differentiating this with respect to $t$ yields \eqref{c'}.
\end{proof}

\subsubsection*{Notation}
Recall that $N_p\geq N_q$. For the convenience of notation, we define $Q_i\equiv0,\ \delta_i\equiv0 $, for all $N_q < i \leq N_p$. 

In view of \cref{prop:c_def}, for every pair $(\delta_i,\tau_i)$ ($i\in \{1,2,\dots,N_p\}$), let $c_i$ denote the unique differentiable function that satisfies 
\begin{equation}\label{c-i-def}
c_i(t)=\tau_i(t)-\delta_i(t-c_i(t)), \quad \text{and}\quad  c_i(t)\in[0,D]\quad \text{for all } t\geq t_0+D.
\end{equation}
Note that $c_i=\tau_i$ for $N_q<i\leq N_p$. Furthermore,  for each $1\leq i\leq N_p$, introduce the function 
\begin{equation}\label{c_P_bar}
    \bar{P}_i(t)\coloneqq  P_i(t) - Q_i(t - c_i(t))(1 - c_i'(t)), \qquad  t\geq t_0+D.
\end{equation}

The following lemma provides a convenient substitution that will be used several times in the sequel. It generalizes both \cite[Lemma 6.9.1.]{gyori1991} and \cite[Lemma 3.3.1]{agarwal2004} by allowing both multiple  negative coefficient functions and several time-variable delays. 
\begin{lemma}\label{c_lem:y_pos}
    Let $c_i$ and $\bar{P}_i$ be defined by \eqref{c-i-def}--\eqref{c_P_bar}, and $\bar{P}_i(t) \geq0$ for all $t\geq t_0+D$. Suppose that $x$ is an eventually positive solution of \eqref{ndde_var_delays}, and that the following properties hold:
    \begin{equation}\label{c_eq:P_bar_weakpos}
    \forall T_0\in{(t_0,\infty)}\; \exists \xi>T_0 :\ \sum\limits_{i=1}^{N_p}\bar{P}_i(\xi)>0,
    \end{equation}
    \begin{equation}\label{c_eq:key_ineq}
    \sum_{i=1}^{N_r}R_i(t)+\sum_{i=1}^{N_q}\int_{t-c_i(t)}^{t} Q_i(s)\,ds \leq 1.
    \end{equation}
    Define
    \begin{equation}\label{c_eq:y_def}
        y(t)\coloneqq x(t)-\sum_{i=1}^{N_r}R_i(t)x(t-r_i(t))-\sum_{i=1}^{N_q}\int_{t-c_i(t)}^tQ_i(s)x(s-\delta_i(s))\,ds.
    \end{equation}
    Then $y'(t)\leq0$ and $x(t)\geq y(t) > 0 $.
\end{lemma}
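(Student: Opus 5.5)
First compute $y'$. Differentiating \eqref{c_eq:y_def}, using \eqref{ndde_var_delays} for the neutral part and the Leibniz rule for the integrals, gives
\[
y'(t) = -\sum_{i=1}^{N_p}P_i(t)x(t-\tau_i(t)) + \sum_{i=1}^{N_q}Q_i(t)x(t-\delta_i(t)) - \sum_{i=1}^{N_q}\Bigl[Q_i(t)x(t-\delta_i(t)) - Q_i(t-c_i(t))\,x\bigl(t-c_i(t)-\delta_i(t-c_i(t))\bigr)(1-c_i'(t))\Bigr].
\]
By \eqref{c-i-def} we have $t-c_i(t)-\delta_i(t-c_i(t)) = t-\tau_i(t)$. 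The $Q_i(t)x(t-\delta_i(t))$ terms therefore cancel, and with the convention $Q_i\equiv 0$ for $i>N_q$ we get
\[
y'(t) = -\sum_{i=1}^{N_p}\bar P_i(t)\,x(t-\tau_i(t)).
\]
Take $T$ large enough that $x(t)>0$ for $t\ge T-2D$. Since $\bar P_i\ge 0$, this gives $y'(t)\le 0$ for $t\ge T$. It also gives $x(t)\ge y(t)$ there, because the subtracted terms in \eqref{c_eq:y_def} are nonnegative.

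Next, show that $y$ is eventually positive, and then positive for all large $t$. Since $y$ is nonincreasing, $\lim_{t\to\infty}y(t)=L\in[-\infty,\infty)$ exists. Using \eqref{c_eq:P_bar_weakpos}, $y'(\xi)<0$ at arbitrarily large $\xi$, so $y$ is not eventually constant. It suffices to rule out $L<0$, and more precisely to rule out $y(t_1)\le 0$ for some large $t_1$. In that case $y(t)\le y(t_1)\le 0$ for all $t\ge t_1$, and $y$ is strictly negative beyond some $\xi>t_1$, so $y(t)\le -\varepsilon<0$ for $t\ge t_2$.

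The main obstacle is showing that $x$ is bounded in this negative case, and I would follow the Győri--Ladas argument. Let $\mu>0$ be small; the plan is a contradiction via a maximum argument. From the definition of $y$, $x(t)=y(t)+\sum R_i x(t-r_i)+\sum\int Q_i x(s-\delta_i)\,ds$. With $M(t)=\max_{[T,t]}x$, condition \eqref{c_eq:key_ineq} gives $x(t)\le -\varepsilon + M(t)$ for $t\ge t_2+2D$, because all the delayed arguments lie in $[t-2D,t]$. So $x$ never attains a new running maximum after $t_2+2D$, which shows $x$ is bounded.

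With boundedness I would refine this using the lim sup. Set $\lambda=\limsup x\in[0,\infty)$. Choose $t_n\to\infty$ with $x(t_n)\to\lambda$, and take lim sup in the identity. The delayed values have limsup at most $\lambda$, and the weights sum to at most $1$, so $\lambda\le -\varepsilon+\lambda$, a contradiction. This requires care because the weights $R_i(t)$ and $\int Q_i$ are bounded by $1$ in sum but are not assumed convergent. The bound $\sum(\text{weights})\cdot\sup_{[t_n-2D,\infty)}x\le \sup_{[t_n-2D,\infty)}x$ handles this, since $x>0$ on that interval. Hence $y(t)>0$ for all $t\ge T$, giving $x(t)\ge y(t)>0$.
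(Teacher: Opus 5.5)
Your proof is correct and follows essentially the paper's route. The derivative computation via \eqref{c-i-def} is identical, and your contradiction argument (assume $y\le-\varepsilon$ eventually, get boundedness of $x$ from the running-maximum bound, then contradict via $\limsup x$ and the tail supremum over $[t_n-2D,\infty)$) is just a contrapositive repackaging of the paper's Case~1 (unbounded $x$, running-maximum points giving $y(s_n)\ge 0$) and Case~2 (bounded $x$, comparing $x(s_n)-y(s_n)$ with the maximum of $x$ over $[s_n-2D,s_n]$); the unused parameter $\mu$ can simply be deleted.
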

The proof is similar to that of \cite[Lemma 3.3.1]{agarwal2004}.
\begin{proof}
    Assume that $x(t)$ is a solution of \eqref{ndde_var_delays} such that $x(t)> 0$ for all $t\geq T\geq t_0+D$. Differentiating $y(t)$ yields
    \begin{align*}
        y'(t) ={}& \biggl[x(t) - \sum_{i=1}^{N_r} R_i(t) x(t - r_i(t))\biggr]' - \sum_{i=1}^{N_q} Q_i(t) x(t - \delta_i(t)) \\ 
        &{}+\sum_{i=1}^{N_q} Q_i(t - c_i(t)) x\Big(t - c_i(t)-\delta_i(t-c_i(t))\Big) (1 - c_i'(t)).
    \end{align*}
    In view of equation \eqref{ndde_var_delays} and the definition of $\bar{P}_i$ and $c_i$ we obtain
    \[
        y'(t) = - \sum_{i=1}^{N_p} \Big[ P_i(t) - Q_i(t - c_i(t))(1 - c_i'(t)) \Big] x(t - \tau_i(t)),
    \]
    or equivalently
    \begin{equation}\label{c_eq:y'=}
        y'(t)=-\sum_{i=1}^{N_p}\bar{P}_i(t)x(t-\tau_i(t)).
    \end{equation}
    Therefore $y'(t)\leq0$ for $t\geq T$.
    It remains to prove the positivity of $y(t)$. In view of \eqref{c_eq:P_bar_weakpos}, \eqref{c_eq:y'=} and the positivity of $x(t)$, it suffices to prove that $y(t)$ is nonnegative.
    
    We distinguish two cases. 

    \medskip \noindent
    \textbf{Case 1.} Suppose that $x(t)$ is unbounded. Then there exists a sequence $(s_n)_{n \in \N}$ with $s_n \to \infty$, $x(s_n)\to\infty$ such that
    \[
        x(t) \leq x(s_n) \quad \text{for all } t \leq s_n.
    \]
    Hence,
    \begin{align*}
        y(s_n) &= x(s_n) - \sum_{i=1}^{N_r} R_i(s_n) x(s_n - r_i(s_n)) - \sum_{i=1}^{N_q} \int_{s_n - c_i(s_n)}^{s_n} Q_i(s) x(s - \delta_i(s)) \, ds\\
       &\geq x(s_n) \left(1 - \sum_{i=1}^{N_r}R_i(s_n) - \sum_{i=1}^{N_q}\int_{s_n -c_i(s_n)}^{s_n} Q_i(s) \, ds \right) \geq 0
    \end{align*}
    for all $n$ by property \eqref{c_eq:key_ineq}. By the monotonicity of $y$, we conclude that $y(t) \geq 0$. 

    \medskip \noindent
    \textbf{Case 2.} Suppose that $x(t)$ is bounded. Let
    \[
        M_x \coloneqq \limsup_{t \to \infty} x(t) \quad \text{and} \quad M_y \coloneqq \lim_{t \to \infty} y(t) \in \R \cup \{-\infty\}.
    \]
    Then there exists a sequence $(s_n)_{n \in \N}$ with $s_n \to \infty$ and $x(s_n) \to M_x$.
    Moreover, let $(\xi_n)_{n \in \N}$ be such that
    \[
        x(\xi_n) \coloneqq  \max \{ x(s) : s_n - 2D \leq s \leq s_n \}.
    \]
    Clearly, $\xi_n \to \infty$, and, by passing to a convergent subsequence if necessary (without changing notation), we may assume
    \[
        \lim_{n \to \infty} x(\xi_n) \leq M_x.
    \]
    
    Now, consider the difference
    \[
        x(s_n) - y(s_n) = \sum_{i=1}^{N_r} R_i(s_n) x(s_n - r_i(s_n)) + \sum_{i=1}^{N_q} \int_{s_n - c_i(s_n)}^{s_n} Q_i(s) x(s - \delta_i(s)) \, ds.
    \]
    By the definition of $\xi_n$, we have
    \[
        x(s_n) - y(s_n) \leq x(\xi_n)\left(\sum_{i=1}^{N_r} R_i(s_n) + \sum_{i=1}^{N_q}\int_{s_n - c_i(s_n)}^{s_n} Q_i(s) \, ds \right).
    \]
    Using inequality \eqref{c_eq:key_ineq} we obtain
    \[
        x(s_n) - y(s_n) \leq x(\xi_n),\quad \ \text{for all } n \in \N.
    \]
    Letting $n\to \infty$ yields $M_x - M_y \leq M_x$, and thus $M_y \geq 0.$
    Since $y'(t) \leq 0$, it follows that $y(t) \geq M_y \geq 0, \text{ for } t\ge T $.
\end{proof}

\section{Results -- variable delays} \label{sec:general-results}
First, we consider the most general case, where all the delay functions in \eqref{ndde_var_delays} are time-dependent and satisfy the properties listed in the introduction. Now we will connect our equation to a simple retarded delayed differential equation. This provides a generalization of \cite[Theorem 6.9.1]{gyori1991}.
\begin{theorem}\label{c_thm:ndde-rdde}
    Let $\PP_i(t)$ and $\cc_i(t)$ be arbitrary such that $0\leq\PP_i(t)\leq \bar{P}_i(t)$ and $\cc_i(t)\geq c_i(t)$  for all $t\in[t_0+D,\infty)$ and $1\leq i \leq N_p$.
    Furthermore, assume that they satisfy
    \begin{align}
            \forall T_0\in{(t_0,\infty)}\; \exists \xi>T_0 :\ \sum\limits_{i=1}^{N_p}\PP_i(\xi)&>0,  \label{H1}\\
        \shortintertext{and}\sum_{i=1}^{N_r}R_i(t)+\sum_{i=1}^{N_q}\int_{t-\cc_i(t)}^{t} Q_i(s)\,ds &\leq 1.  \label{H2}
    \end{align}
    If every solution of     \begin{equation}\label{c_eq:retarded}
        x'(t) + \sum_{i=1}^{N_p} \PP_i(t) x(t - \tau_i(t)) = 0
    \end{equation}
    oscillates, then every solution of equation \eqref{ndde_var_delays} also oscillates.
\end{theorem}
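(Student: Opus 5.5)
We argue by contradiction: suppose \eqref{ndde_var_delays} has an eventually positive solution $x$, and we will produce an eventually positive solution of the differential inequality associated with \eqref{c_eq:retarded}. Then the standard fact that a first order delay differential inequality $z'(t)+\sum_i \PP_i(t)z(t-\tau_i(t))\le 0$ has an eventually positive solution if and only if the corresponding equation does (see \cite[Section 3]{gyori1991}) gives a contradiction.

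The first step is to apply \Cref{c_lem:y_pos}. Its hypotheses are $\bar P_i\ge0$, \eqref{c_eq:P_bar_weakpos} and \eqref{c_eq:key_ineq}. Since $0\le \PP_i\le \bar P_i$, we get $\bar P_i\ge 0$. Also \eqref{H1} implies \eqref{c_eq:P_bar_weakpos}, because $\sum\bar P_i(\xi)\ge\sum \PP_i(\xi)>0$. Finally, $c_i\le \cc_i$ and $Q_i\ge0$ give
\[
\int_{t-c_i(t)}^t Q_i\le \int_{t-\cc_i(t)}^t Q_i,
\]
so \eqref{H2} implies \eqref{c_eq:key_ineq}. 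The lemma then provides $y$ defined by \eqref{c_eq:y_def} with $y'\le0$ and $0<y(t)\le x(t)$ eventually.

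The second step uses identity \eqref{c_eq:y'=} from the proof of the lemma:
\[
y'(t)=-\sum_{i=1}^{N_p}\bar P_i(t)x(t-\tau_i(t)).
\]
Take $t$ large enough that $x(t-\tau_i(t))\ge y(t-\tau_i(t))>0$. Combining this with $\bar P_i\ge \PP_i\ge0$ yields
\[
y'(t)+\sum_{i=1}^{N_p}\PP_i(t)\,y(t-\tau_i(t))\le 0 ,
\]
so $y$ is an eventually positive solution of the inequality corresponding to \eqref{c_eq:retarded}.

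The main obstacle is the final step, passing from the inequality to an eventually positive solution of \eqref{c_eq:retarded} itself. If we may cite the classical comparison result for variable coefficients and delays (Győri--Ladas, Theorem 3.2.1 type), we are done. Otherwise we construct a solution directly. Integrate the inequality from $t$ to $\infty$, using that $y$ is positive and decreasing:
\[
y(t)\ge\int_t^\infty\sum_i\PP_i(s)y(s-\tau_i(s))\,ds .
\]
Define the operator
\[
(\mathcal T z)(t)=\int_t^\infty\sum_i\PP_i(s)z(s-\tau_i(s))\,ds \quad (t\ge T),
\]
with $z=y$ on the initial interval. Starting from $z_0=y$, the iterates $z_{n+1}=\mathcal T z_n$ are monotonically nonincreasing and bounded below by $0$, so they converge to a fixed point $z$ with $0\le z\le y$, which solves \eqref{c_eq:retarded}. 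Positivity of $z$ must be checked. If $z$ vanished at some $t_1$, then $z\equiv 0$ on $[t_1,\infty)$ by monotonicity, and we would need to derive a contradiction with the initial data and with \eqref{H1}. Instead of arguing this directly, I would try the standard trick of replacing $\mathcal T$ by
\[
z(t)=y(T)\exp\Bigl(-\int_T^t\sum_i\PP_i(s)\tfrac{z(s-\tau_i(s))}{z(s)}\,ds\Bigr).
\]
Equivalently, one works with the ratio $w=-y'/y$, which satisfies a generalized characteristic inequality, and applies the monotone iteration to that inequality. This gives a positive solution and hence the contradiction.
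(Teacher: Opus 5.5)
Your proposal is correct and follows the paper's proof step for step. Conditions \eqref{H1}--\eqref{H2}, together with $0\le\PP_i\le\bar P_i$ and $\cc_i\ge c_i$, give the hypotheses of \Cref{c_lem:y_pos}; then $y\le x$ turns \eqref{c_eq:y'=} into $y'(t)+\sum_{i}\PP_i(t)y(t-\tau_i(t))\le 0$ with $y$ eventually positive; and the Győri--Ladas comparison result (the paper cites \cite[Corollary 3.2.2]{gyori1991}) then gives an eventually positive solution of \eqref{c_eq:retarded}, a contradiction.

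Your fallback construction is not needed, but of the two versions you sketch only the exponential (generalized characteristic equation) iteration is sound here. Condition \eqref{H1} allows $\PP_i$ to vanish on intervals longer than the delays, and then the integral-operator iteration can converge to $z\equiv 0$ on $[T,\infty)$, so its positivity cannot be recovered.
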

\begin{proof}
    Assume that every solution of \eqref{c_eq:retarded} oscillates. For the sake of contradiction, suppose that \eqref{ndde_var_delays} has an eventually positive solution $x$. Notice that conditions \eqref{H1}--\eqref{H2} imply that inequalities \eqref{c_eq:P_bar_weakpos} and \eqref{c_eq:key_ineq} hold. Define $y$ by \eqref{c_eq:y_def}. As seen in the proof of \Cref{c_lem:y_pos}, 
    \[
        y'(t)+\sum_{i=1}^{N_p}\bar{P}_i(t)x(t-\tau_i(t))=0.
    \]
    Using the fact that $y(t)\leq x(t)$, and $\PP_i(t)\leq \bar{P}_i(t)$ we obtain
    \[
        y'(t)+\sum_{i=1}^{N_p}\PP_i(t)y(t-\tau_i(t))\leq0,
    \]
    where $y$ is eventually positive by \Cref{c_lem:y_pos}. According to \cite[Corollary 3.2.2]{gyori1991}, this inequality implies that there exists an eventually positive solution $z$ of
    \[
        z'(t)+\sum_{i=1}^{N_p}\PP_i(t)z(t-\tau_i(t))=0.
    \]
    This is a contradiction that completes our proof.
\end{proof}
Since, in general, we do not have explicit formulas for the functions $c_i$, a direct application of the previous theorem and corollaries might be difficult. Therefore, we provide further sufficient conditions. Let us recall that $D$ is an upper bound for all delays.
\begin{corollary}\label{cor:ndde-rdde}
    Assume that there exists $\Delta_j$ such as $\delta'_j(t)\leq \Delta_j<1$ and $\tau'_i(t)<1$ for all $t\geq t_0,\ j\in\{1,\dots N_q\},\ i\in\{1,\dots,N_p \} $. Let
    \[
        \PP_i(t)\coloneqq P_i(t)-\frac{1-\tau'_i(t)}{1-\Delta_i}\sup_{s\in[t-D,t]}Q_i(s)>0, \qquad (1\leq i\leq N_p) 
    \]
    and
    \begin{equation}\label{sufficient-for-H2}
        \sum_{i=1}^{N_r}R_i(t)+\sum_{i=1}^{N_q}\int_{t-D}^{t} Q_i(s)\,ds \leq 1 ,
    \end{equation}
    moreover, assume that every solution of
    \[
        x'(t)+\sum_{i=1}^{N_p}\PP_i(t)x(t-\tau_i(t))=0
    \]
    oscillates. Then every solution of \eqref{ndde_var_delays} oscillates.
\end{corollary}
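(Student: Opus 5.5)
The plan is to apply \Cref{c_thm:ndde-rdde} with $\cc_i(t)\coloneqq D$ for all $i$ and with $\PP_i$ as defined in the statement. Three things must then be checked: that $\cc_i\geq c_i$, that $0\leq \PP_i\leq \bar P_i$, and that \eqref{H1}--\eqref{H2} hold. Once these are verified, the oscillation of the retarded equation gives the conclusion directly through \Cref{c_thm:ndde-rdde}.

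The first inequality is immediate, since $c_i(t)\in[0,D]$ by \Cref{prop:c_def}. With $\cc_i\equiv D$, condition \eqref{H2} is literally \eqref{sufficient-for-H2}. The hypothesis $\PP_i(t)>0$ gives $\PP_i\geq0$ and, trivially, \eqref{H1}. For indices $N_q<i\leq N_p$ we have $Q_i\equiv0$, so $\PP_i=P_i=\bar P_i$. For these indices one should read $\Delta_i$ as any constant below $1$; it multiplies zero, so its value is irrelevant.

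The main step is to show $\PP_i\leq\bar P_i$ for $i\leq N_q$, which amounts to
\[
Q_i(t-c_i(t))(1-c_i'(t))\leq \frac{1-\tau_i'(t)}{1-\Delta_i}\sup_{s\in[t-D,t]}Q_i(s).
\]
To prove it, use \eqref{c'} to write $1-c_i'(t)=\frac{1-\tau_i'(t)}{1-\delta_i'(t-c_i(t))}$. Here $1-\tau_i'(t)>0$ by assumption, and $0<1-\Delta_i\leq 1-\delta_i'(t-c_i(t))$. This requires $t-c_i(t)\geq t_0$, which holds for $t\geq t_0+D$. Together these give $0<1-c_i'(t)\leq \frac{1-\tau_i'(t)}{1-\Delta_i}$. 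Since $c_i(t)\in[0,D]$, we also have $t-c_i(t)\in[t-D,t]$, so $Q_i(t-c_i(t))\leq\sup_{s\in[t-D,t]}Q_i(s)$. Multiplying these two nonnegative bounds yields the claim.

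The only real care point is the sign bookkeeping in this product. Positivity of $1-\tau_i'$ is exactly what guarantees $1-c_i'>0$, so the bound on the product is legitimate and the hypothesis $\tau_i'<1$ is used precisely there. The remaining hypotheses of \Cref{c_thm:ndde-rdde} are then satisfied, and every solution of \eqref{ndde_var_delays} oscillates.
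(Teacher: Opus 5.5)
Your proof is correct and follows the paper's argument. The key step is the same chain of inequalities: from \eqref{c'}, $\delta_i'\le\Delta_i$, $\tau_i'<1$ and $t-c_i(t)\in[t-D,t]$ you get $\bar P_i(t)\ge P_i(t)-\frac{1-\tau_i'(t)}{1-\Delta_i}\sup_{s\in[t-D,t]}Q_i(s)>0$, after which \Cref{c_thm:ndde-rdde} applies. Your explicit choice $\cc_i\equiv D$, which makes \eqref{H2} literally \eqref{sufficient-for-H2}, and your remark on the indices $N_q<i\le N_p$ only make explicit what the paper leaves implicit.
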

\begin{proof}
    Inequality \eqref{sufficient-for-H2} implies  \eqref{H2}, while condition \eqref{H1} comes from the inequalities
    \begin{align*}
        0< P_i(t)-\frac{1-\tau'_i(t)}{1-\Delta_i}\sup_{s\in[t-D,t]}Q_i(s)&\leq P_i(t)-\frac{1-\tau_i'(t)}{1-\delta_i'(t-c_i(t))}Q_i(t-c_i(t)) \\
        &= P_i(t)-(1-c_i'(t))Q_i(t-c_i(t)) =\bar{P}_i(t).
    \end{align*}
    The claim follows immediately by \Cref{c_thm:ndde-rdde}.
\end{proof}
\Cref{c_thm:ndde-rdde} or \Cref{cor:ndde-rdde} can be combined with any oscillation criterion for first order linear DDEs of the form $x'(t)+\sum_{i=1}^n p_i(t)x(t-\tau_i(t))=0$ to obtain concrete sufficient criteria for the oscillation of all solutions of equation \eqref{ndde_var_delays}. The interested reader may find numerous such criteria in the survey papers \cite{moredi:stavroulakis:18-survey, stavroulakis:14-survey} or some more recent ones e.g.\ in \cite{abel2020,koplatadze:24,attia:el-matary:23,dix:21,chatzarakis:jadlovska:18, pituk:stavroulakis-john:22, pituk:stavroulakis:stavroulakis:23, zhuang:wang:wu:21,zhuang:wang:wu:21-iteration} and in the references therein. In the following corollary we combine \Cref{c_thm:ndde-rdde} with some classical results. 
\begin{corollary}\label{cor:I}
    Assume that there exist functions $0\leq\PP_i\leq \bar{P}_i$ and $\cc_i\geq c_i$ such that conditions \eqref{H1}--\eqref{H2} hold. Let $\tau_{\min}(t)\coloneqq \min_{1\leq i\leq N_p}\tau_i(t)$. If any of the following conditions is fulfilled, then every solution of equation \eqref{ndde_var_delays} oscillates.
    \begin{equation*}\tag{$A_1$}\label{A_1}
        \tau_i(t)\equiv\tau_i\quad \text{are positive constants and}\quad \liminf_{t\to\infty}\sum_{i=1}^{N_p}\int_{t-\tau_i}^t\PP_i(s)\,ds > \frac{1}{e},
    \end{equation*}
    \begin{equation*}\tag{$B_1$}\label{B_1}
    \liminf_{t\to\infty}\sum_{i=1}^{N_p}\PP_i(t)\tau_i(t)>\frac{1}{e},
    \end{equation*}
    \begin{equation*}\tag{$C_1$}\label{C_1}
        \liminf_{t\to\infty}\int_{t-\tau_{\min}(t)}^t\sum_{i=1}^{N_p}\PP_i(s)\,ds > \frac{1}{e},
    \end{equation*} 
    \begin{equation*}\tag{$D_1$}\label{D_1}
        \limsup_{t\to\infty}\int_{t-\tau_{\min}(t)}^t\sum_{i=1}^{N_p}\PP_i(s)\,ds > 1.
    \end{equation*}
\end{corollary}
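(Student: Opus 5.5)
The plan is to reduce everything to \Cref{c_thm:ndde-rdde}. Since the functions $\PP_i$ and $\cc_i$ are assumed to satisfy $0\leq\PP_i\leq\bar{P}_i$, $\cc_i\geq c_i$ and \eqref{H1}--\eqref{H2}, that theorem applies. It therefore suffices to show that each of \eqref{A_1}--\eqref{D_1} guarantees that every solution of the retarded equation \eqref{c_eq:retarded} oscillates. Each of these four conditions is the hypothesis of a classical oscillation criterion for first order linear DDEs of the form $x'(t)+\sum_{i=1}^{N_p}p_i(t)x(t-\tau_i(t))=0$ with $p_i\geq0$. So the proof will consist of matching each condition with the appropriate known theorem (from \cite{gyori1991}, \cite{Erbe1995} or \cite{agarwal2004}) applied with $p_i=\PP_i$.

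\textbf{Case \eqref{A_1}.} Here the delays are constant and we have $\liminf_{t\to\infty}\sum_i\int_{t-\tau_i}^t p_i>1/e$. This is the standard criterion for several constant delays with variable coefficients; it can be found in \cite[Section 3.4]{gyori1991} or \cite{Erbe1995}.

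\textbf{Case \eqref{B_1}.} The condition $\liminf\sum_i p_i(t)\tau_i(t)>1/e$ is the classical pointwise criterion for variable delays, also found in \cite[Chapter 3]{gyori1991}.

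\textbf{Cases \eqref{C_1} and \eqref{D_1}.} I will pass to the scalar comparison. Suppose \eqref{c_eq:retarded} has an eventually positive solution $z$. Then $z$ is eventually nonincreasing. Since $t-\tau_i(t)\leq t-\tau_{\min}(t)$, we get $z(t-\tau_i(t))\geq z(t-\tau_{\min}(t))$. Hence $z$ satisfies
\[
z'(t)+\Bigl(\sum_{i=1}^{N_p}\PP_i(t)\Bigr)z(t-\tau_{\min}(t))\leq0 .
\]
By \cite[Corollary 3.2.2]{gyori1991} (the same tool used in the proof of \Cref{c_thm:ndde-rdde}), the single-delay equation $w'(t)+p(t)w(t-\tau_{\min}(t))=0$ with $p=\sum_i\PP_i$ then has an eventually positive solution. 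This contradicts the classical Koplatadze--Chanturija type criterion ($\liminf\int_{t-\tau_{\min}(t)}^t p>1/e$), which gives \eqref{C_1}. It also contradicts the Ladas--Lakshmikantham--Papadakis type criterion ($\limsup\int>1$), which gives \eqref{D_1}.

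\textbf{Main obstacle.} The main difficulty is the technical hypotheses these classical results require. They need $t-\tau_{\min}(t)\to\infty$ (automatic, since the delays are bounded by $D$). The $\limsup$ criterion \eqref{D_1} usually also requires $t-\tau_{\min}(t)$ to be nondecreasing. The latter could be secured from the standing assumption $\tau_i'$-type bounds, or by replacing the delay argument with $\max_{s\leq t}(s-\tau_{\min}(s))$. I would first try simply citing the form of the criteria in \cite{gyori1991}, checking that the monotonicity hypothesis is satisfied or removable there.
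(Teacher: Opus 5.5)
Your strategy matches the paper's. Everything goes through \Cref{c_thm:ndde-rdde}, and each of \eqref{A_1}--\eqref{D_1} is matched with a known criterion for \eqref{c_eq:retarded}; the paper quotes \cite[Corollary~1]{bingtuan1996}, \cite[Corollary~2.6.2]{Erbe1995}, \cite[Theorem~2.7.1]{ladde_lak1987} and \cite[Remark~2.7.3]{ladde_lak1987}, respectively. For \eqref{A_1}, check that your source really states the $\liminf$ of the \emph{sum}, as Li's result does. Your reduction of \eqref{C_1}--\eqref{D_1} to the single delay $\tau_{\min}$, via the eventual monotonicity of $z$ and \cite[Corollary~3.2.2]{gyori1991}, is a correct and harmless variant of quoting the several-delay statements directly.

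The real gap is \eqref{D_1}, which you flagged but did not close, and neither of your remedies works. Put $p\coloneqq\sum_i\bar P^\ast_i$ and $g(t)\coloneqq t-\tau_{\min}(t)$.

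First, there is no standing bound on $\tau_i'$. Only $\delta_i'<1$ is assumed; $\tau_i'<1$ is a hypothesis of \Cref{cor:ndde-rdde}, not of this corollary.

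Second, passing to $\sigma(t)\coloneqq\max_{t_0\le s\le t}g(s)$ gives a valid inequality $z'(t)+p(t)z(\sigma(t))\le 0$ with a nondecreasing argument. But $\sigma\ge g$ forces $\int_{\sigma(t)}^t p\le\int_{g(t)}^t p$, so \eqref{D_1} tells you nothing about $\limsup_{t\to\infty}\int_{\sigma(t)}^t p$. The same trick does rescue \eqref{C_1}: if $\sigma(t)=g(s_0)$ with $s_0\le t$, then $\int_{\sigma(t)}^t p\ge\int_{g(s_0)}^{s_0}p$ and $s_0\to\infty$, so the $\liminf$ survives. For the $\limsup$ there is no such bound.

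Third, monotonicity cannot be dispensed with. Braverman and Karpuz showed that $\limsup\int_{g(t)}^t p>1$ does not imply oscillation of $x'(t)+p(t)x(g(t))=0$ for non-monotone $g$. Short smooth spikes of the delay make the window $[g(t),t]$ long at isolated instants while perturbing a positive solution only negligibly. For instance, take $P_1\equiv 1/(2e)$, $R_1\equiv Q_1\equiv\delta_1\equiv 0$, and $\tau_1\equiv 1$ except for sufficiently narrow $C^1$ spikes up to $6$ near the integers. This satisfies every hypothesis of the corollary with $\bar P^\ast_1=\bar P_1=P_1$, and the quantity in \eqref{D_1} equals $3/e>1$. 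Yet it has an eventually positive solution: a positive supersolution $u=\exp(-\int\lambda)$ exists, with $\lambda=1$ off the spikes and $\lambda$ large on them, and then \cite[Corollary~3.2.2]{gyori1991} applies.

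So \eqref{D_1} cannot be proved in the stated generality. One must add that $t-\tau_{\min}(t)$ is nondecreasing, e.g.\ $\tau_i'\le 1$ for all $i$. The paper's proof, which just invokes \cite[Remark~2.7.3]{ladde_lak1987}, passes over this point too. With that hypothesis your argument closes, even without \cite[Corollary~3.2.2]{gyori1991}. Integrating $z'\le -p\,z(g)$ over $[g(t),t]$ and using $z(g(s))\ge z(g(t))$ for $s\le t$ gives $\int_{g(t)}^t p\le 1-z(t)/z(g(t))<1$ for all large $t$. This is exactly where monotonicity is used.
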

\begin{proof}
    According to \cite[Corollary~1]{bingtuan1996}, \cite[Corollary~2.6.2]{Erbe1995}, \cite[Theorem~2.7.1]{ladde_lak1987}, and \cite[Remark~2.7.3]{ladde_lak1987}, respectively, each of conditions \eqref{A_1}, \eqref{B_1}, \eqref{C_1}, and \eqref{D_1} ensures that every solution of \eqref{c_eq:retarded} oscillates. 
    Hence, by \Cref{c_thm:ndde-rdde}, it follows that every solution of \eqref{ndde_var_delays} oscillates.
\end{proof}

The next corollary uses the so-called slowly varying property. A function $f$ is slowly varying at infinity (additively) if, for all $h\in\R$, the expression $|f(t+h)-f(t)|$ tends to zero as $t$ goes to infinity. We recall a characterization of continuous slowly varying functions given by \cite[p.\ 30]{Pituk2017}: a continuous function $f$ is slowly varying at infinity if and only if it can be decomposed into a sum of two functions, where one of them is continuous and converges to a finite value, and the other one is differentiable with its derivative tending   to zero as $t\to\infty$.
\begin{corollary}\label{cor:slow}
    Assume that there exist functions $0\leq\PP_i\leq \bar{P}_i$ and $\cc_i\geq c_i$ that conditions \eqref{H1}--\eqref{H2} hold. Moreover, suppose that $\liminf\limits_{t\to\infty}\int_{t-\tau_i(t)}^t\PP_i(s)\,ds>0$, $1\leq i \leq N_p $, 
    $\PP_i(t)$, and $\tau_i(t)$ are all uniformly continuous and bounded for all $i\in\{1,2,\dots,N_p\}$. If any of the following conditions is fulfilled, then every solution of equation \eqref{ndde_var_delays} oscillates.
   \begin{enumerate}[label=\textnormal{(\alph*)}]
        \item
        $\tau_i(t)\equiv \tau_i$ are positive constants, $\sum_{i=1}^{N_p}\int_{t-\tau_i}^{t}\PP_i(s)\,ds$ is slowly varying at infinity and
        \begin{equation*}\label{A_2}\tag{$A_2$}       
        \limsup_{t\to\infty}\sum_{i=1}^{N_p}\int_{t-\tau_i}^{t}\PP_i(s)\,ds > \frac{1}{e}.
        \end{equation*}
        
        \item
        $\sum_{i=1}^{N_p}\PP_i(t)\tau_i(t)$ is slowly varying at infinity and
        \begin{equation*}\label{B_2}\tag{$B_2$}
        \limsup_{t\to\infty}\sum_{i=1}^{N_p}\PP_i(t)\tau_i(t) > \frac{1}{e}.
        \end{equation*}
        
        \item
        There exists a uniformly continuous function $\hat \tau\colon [t_0,\infty)\to [0,\infty)$ such that  $0\leq \hat\tau(t)\leq \tau_i(t)$ for all $t\geq t_0$ and $1\leq i\leq N_p$, and  $\int_{t-\hat\tau(t)}^{t}\sum_{i=1}^{N_p}\PP_i(s)\,ds $ is slowly varying at infinity, and
        \begin{equation*}\label{C_2}\tag{$C_2$}
        \limsup_{t\to\infty}
        \int_{t-\hat{\tau}(t)}^{t}\sum_{i=1}^{N_p}\PP_i(s)\,ds > \frac{1}{e}.
        \end{equation*}
    \end{enumerate}
\end{corollary}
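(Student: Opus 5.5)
The plan follows the proof of \Cref{cor:I}: by \Cref{c_thm:ndde-rdde} it is enough to show that every solution of the retarded equation \eqref{c_eq:retarded} oscillates. Under each of the three conditions this should follow from a known oscillation criterion for first order linear DDEs with slowly varying data. Criteria of this type appear in Pituk's work \cite{Pituk2017} and related papers. Roughly, if the coefficients and delays are bounded and uniformly continuous, $\liminf\int_{t-\tau_i(t)}^t p_i>0$, and the relevant quantity is slowly varying, then $\limsup>1/e$ already forces oscillation. So the proof reduces to checking that the hypotheses of \Cref{cor:slow} match those of such a criterion for $x'(t)+\sum_{i}\PP_i(t)x(t-\tau_i(t))=0$, and then invoking \Cref{c_thm:ndde-rdde}. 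The hypotheses \eqref{H1}--\eqref{H2} are assumed verbatim, so that part of the application is immediate.

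If a suitable criterion has to be justified rather than cited, I would argue by contradiction, working with the limiting equation. Suppose $z$ is an eventually positive solution of \eqref{c_eq:retarded}, and set $w(t)=-z'(t)/z(t)=\sum_i\PP_i(t)z(t-\tau_i(t))/z(t)$. The condition $\liminf\int_{t-\tau_i}^t\PP_i>0$ gives, by the standard argument (as in Győri--Ladas), that $w$ is eventually bounded. Choose $t_n\to\infty$ along which the slowly varying quantity approaches its $\limsup$. By uniform continuity and boundedness, Arzelà--Ascoli applied to the shifts $\PP_i(t_n+\cdot)$, $\tau_i(t_n+\cdot)$ and $w(t_n+\cdot)$ gives a convergent subsequence. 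In the limit, $u(t)=\exp(-\int_0^t w_\infty)$ is a positive solution of a limiting DDE $u'(t)+\sum_i p_i(t)u(t-\sigma_i(t))=0$ on all of $\R$.

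Slow variation means the relevant functional of the limit coefficients is constant, equal to $L>1/e$. In case (a), $\sum_i\int_{t-\tau_i}^t p_i\equiv L$; in case (b), $\sum_i p_i\sigma_i\equiv L$; in case (c), $\int_{t-\hat\sigma(t)}^t\sum_i p_i\equiv L$. For each case I would then use the corresponding liminf criterion (\eqref{A_1}, \eqref{B_1}, \eqref{C_1}) for the limiting equation, with the liminf now equal to $L>1/e$, to conclude that every solution of the limiting equation oscillates. This contradicts the positivity of $u$. In case (c) the reduction to $\hat\tau$ uses $\hat\tau\le\tau_i$ and the monotonicity of $z$ (it is decreasing) to compare with an equation with the single smaller delay, so that \eqref{C_1}-type reasoning applies.

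The main obstacle is the passage to the limit. Two things need care. First, the boundedness of $w$ on the relevant windows must be established, and this is exactly where $\liminf\int\PP_i>0$ is used. Second, the limiting equation must be inherited, which requires controlling $\int_{t-\tau_i(t)}^t w$ under uniform convergence of the delays. I would try first to quote this compactness step from \cite{Pituk2017}, and only otherwise carry it out directly as sketched above.
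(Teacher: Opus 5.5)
Your proposal is correct and matches the paper's proof. The paper also reduces to the oscillation of \eqref{c_eq:retarded} via \Cref{c_thm:ndde-rdde} and then cites a slowly-varying criterion; the precise reference covering several variable delays is \cite[Theorem~2.2(a)--(c)]{abel2020}, not \cite{Pituk2017} alone, and your limiting-equation sketch is in effect the argument behind that cited theorem, so it is not needed once the citation is made.
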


\begin{proof}
    According to \cite[Theorem 2.2. (a)/(b)/(c)]{abel2020}, either \eqref{A_2}, \eqref{B_2}, or \eqref{C_2} implies that every solution of \eqref{c_eq:retarded} oscillates, thus applying \Cref{c_thm:ndde-rdde} we obtain that every solution of \eqref{ndde_var_delays} oscillates.
\end{proof}
There are special cases in which the functions $c_i(t)$ admit explicit closed-form expressions -- see \Cref{ex-variable-delays}. In particular, this is the case, if the functions $\delta_i$ are all constants.

Indeed, if the delay functions $\delta_i$ are all constant, then $c_i(t)$ can be expressed explicitly as $c_i(t)=\tau_i(t)-\delta_i$ and \eqref{c_P_bar} simplifies to
\begin{equation}\label{eq:P_bar}
    \bar{P}_i(t)\coloneqq  P_i(t) - Q_i(t - \tau_i(t)+ \delta_i)(1 - \tau_i'(t)), \qquad  t\in[t_0+D,\infty).
\end{equation}
Our results so far can be sharpened as summed up in the following theorem.
\begin{theorem}\label{thm:constant-delta}
    If the delays $\delta_k$ are constant for $1\leq k\leq N_q$, then \Cref{c_thm:ndde-rdde,cor:I,cor:slow} hold true with the choice 
    \[\bar{P}^*_i(t)\coloneqq P_i(t) - Q_i(t - \tau_i(t)+ \delta_i)(1 - \tau_i'(t)) \quad \text{and}\quad c_i^*(t)\coloneqq \tau_i(t)-\delta_i.\]
\end{theorem}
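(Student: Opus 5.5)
The claim is that, for constant $\delta_k$, the explicit pair $\PP_i$, $\cc_i$ given in the statement is admissible in \Cref{c_thm:ndde-rdde,cor:I,cor:slow}. Those results only require functions satisfying $0\le \PP_i\le \bar P_i$ and $\cc_i\ge c_i$, together with \eqref{H1}--\eqref{H2}. So my plan is to show that the proposed $\cc_i$ and $\PP_i$ coincide with $c_i$ and $\bar P_i$ themselves. The three results then apply verbatim, with the hypotheses that refer to $\PP_i$ (nonnegativity, \eqref{H1}, \eqref{H2}, and the limit conditions) read for this explicit choice.

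\textbf{Step 1: identify $c_i$.} For $1\le i\le N_q$ with $\delta_i$ constant, \eqref{c-i-def} reads $c_i(t)=\tau_i(t)-\delta_i$. This is indeed a solution, since $\delta_i(t-c_i(t))=\delta_i$. Its range lies in $[0,D]$ because $0\le\delta_i\le\tau_i(t)\le D$. Uniqueness follows from \Cref{prop:c_def}, whose hypothesis $\delta_i'=0<1$ is trivially met. Hence $c_i(t)=\tau_i(t)-\delta_i=\cc_i(t)$. For $N_q<i\le N_p$ the convention $\delta_i\equiv0$ gives $c_i=\tau_i$, which again matches the formula.

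\textbf{Step 2: identify $\bar P_i$.} Differentiating, $1-c_i'(t)=1-\tau_i'(t)$; equivalently, \eqref{c'} applies with $\delta_i'\equiv0$. Substituting into \eqref{c_P_bar} yields exactly \eqref{eq:P_bar}. Thus $\bar P_i=\PP_i$ with the choice in the statement. For $i>N_q$ we have $Q_i\equiv0$, so $\bar P_i=P_i$, and the formula agrees.

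\textbf{Step 3: conclude.} The inequalities $\PP_i\le\bar P_i$ and $\cc_i\ge c_i$ hold with equality. Therefore, whenever the explicit $\PP_i$ is nonnegative and satisfies \eqref{H1}, \eqref{H2} (with $\cc_i=\tau_i-\delta_i$), and the respective condition among \eqref{A_1}--\eqref{D_1} or \eqref{A_2}--\eqref{C_2}, the proofs of \Cref{c_thm:ndde-rdde,cor:I,cor:slow} go through unchanged.

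There is no substantive obstacle here. The only points requiring care are the indices $i>N_q$ handled via the convention, and the fact that \Cref{prop:c_def} guarantees uniqueness, so that $c_i$ is genuinely equal to $\tau_i-\delta_i$ rather than merely one possible solution of \eqref{c-i-def}.
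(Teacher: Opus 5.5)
Your proposal is correct and follows the paper's own reasoning. The paper simply notes, in the paragraph before the theorem, that for constant $\delta_i$ one has $c_i=\tau_i-\delta_i$ and that $\bar P_i$ reduces to \eqref{eq:P_bar}, so the stated choice is exactly $\PP_i=\bar P_i$, $\cc_i=c_i$. Your appeal to \Cref{prop:c_def} for uniqueness is harmless but unnecessary, since with constant $\delta_i$ the right-hand side of \eqref{c-i-def} no longer depends on $c_i$.
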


\section{Results -- constant delays}\label{sec:constant-results}
This section is devoted to the special case when all the delays in \eqref{ndde_var_delays} are constant, that is, we deal with the NDDE
\begin{equation}\label{eq:constant_delays}
    \bigg[x(t)-\sum_{i=1}^{N_r}R_i(t)x(t-r_i)\bigg]'+\sum_{i=1}^{N_p}P_i(t)x(t-\tau_i)-\sum_{i=1}^{N_q}Q_i(t)x(t-\delta_i)=0, 
\end{equation}
for $t\geq t_0$, where $N_q\leq N_p$, the coefficient functions $R_i,\ P_j,$ and $ Q_k$ are  nonnegative and continuous for all $1\leq i\leq N_r$, $1\leq j\leq N_p$ and $1\leq k\leq N_q$, while $R_i\in C^1$, and the constant delays $r_i,\ \tau_j,\ \delta_k $ are nonnegative, bounded above by $D$ and fulfill $\delta_k\leq \tau_k$ for all $1\leq k\leq N_q$. 

In this case \eqref{eq:P_bar} simplifies to $\bar{P}_i(t)=P_i(t)-Q_i(t-\tau_i+\delta_i)$. The following comparison theorem generalizes \cite[Theorem 3.4.1]{agarwal2004} by allowing multiple delays. It also serves as a basis to our second main result, \Cref{thm:agar}.

\begin{theorem}\label{thm:eq-ineq}
    Assume that $\tau_i>0$ for all $1\leq i \leq N_p$, moreover,
    \begin{align*}
        \bar{P}_i(t)=P_i(t)-Q_i(t-\tau_i+\delta_i)&\geq 0,\\
        \sum_{i=1}^{N_r}R_i(t)+\sum_{i=1}^{N_q}\int_{t-\tau_i+\delta_i}^{t} Q_i(s)\,ds &\leq 1,
    \end{align*}
    and either
    \begin{align}\label{eq:R+P}
        \sum_{i=1}^{N_r}R_i(t)+\sum_{i=1}^{N_p}\bar{P}_i(t)&>0 \\
    \shortintertext{or}    
   \label{eq:P>0fortau}
        \sum_{i=1}^{N_p}\int_{t-\tau_i}^t\bar{P}_i(s)\,ds&>0
    \end{align}
    for all $t\geq t_0+D$. Then every solution of equation \eqref{eq:constant_delays} oscillates if and only if every solution of the corresponding differential inequality 
    \begin{equation}\label{eq:ineq}
    \bigg[x(t)-\sum_{i=1}^{N_r}R_i(t)x(t-r_i)\bigg]'+\sum_{i=1}^{N_p}P_i(t)x(t-\tau_i)-\sum_{i=1}^{N_q}Q_i(t)x(t-\delta_i)\leq 0
    \end{equation}
    oscillates.
\end{theorem}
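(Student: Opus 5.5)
One direction is immediate: every solution of \eqref{eq:constant_delays} satisfies \eqref{eq:ineq}, with equality. So if every solution of the inequality oscillates, every solution of the equation does too. For the converse I would argue by contraposition. Assume \eqref{eq:ineq} has an eventually positive solution $x$; the goal is to build an eventually positive solution of \eqref{eq:constant_delays}. Following the proof of \Cref{c_lem:y_pos} with $c_i=\tau_i-\delta_i$, define
\[
y(t)=x(t)-\sum_{i=1}^{N_r}R_i(t)x(t-r_i)-\sum_{i=1}^{N_q}\int_{t-\tau_i+\delta_i}^{t}Q_i(s)x(s-\delta_i)\,ds .
\]
The same differentiation now gives $y'(t)+\sum_{i}\bar P_i(t)x(t-\tau_i)\le 0$, so $y$ is nonincreasing. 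The two cases of \Cref{c_lem:y_pos} (unbounded and bounded $x$) use only monotonicity of $y$ and the integral bound, so they carry over and give $y\ge 0$. Strict positivity should then follow from \eqref{eq:R+P} or \eqref{eq:P>0fortau}, as I explain below.

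Integrating $y'\le -\sum_i\bar P_i x(\cdot-\tau_i)$ from $t$ to $\infty$ and using $y(\infty)=:L\ge0$ gives
\[
x(t)\ \ge\ \sum_{i=1}^{N_r}R_i(t)x(t-r_i)+\sum_{i=1}^{N_q}\int_{t-\tau_i+\delta_i}^{t}Q_i(s)x(s-\delta_i)\,ds+\int_t^\infty\sum_{i=1}^{N_p}\bar P_i(s)x(s-\tau_i)\,ds+L
\]
for $t\ge T$. I would then look for an eventually positive solution of the equation as a fixed point of the operator $\Phi$ given by the right-hand side with equality (with $L$ replaced by $0$). Any fixed point $z$ satisfies the integrated identity, and differentiating it reproduces \eqref{eq:constant_delays}. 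The constraint $\delta_i\le\tau_i$ ensures that all terms use only past values. Consider the set $\Omega=\{z:\ 0\le z\le x \text{ on } [T,\infty)\}$, with $z=x$ on an initial interval $[T-2D,T]$ (suitably glued). Since all coefficients $R_i,Q_i,\bar P_i$ are nonnegative, $\Phi$ is monotone and maps $\Omega$ into itself. Starting from $z_0=x$, the iterates $z_{k+1}=\Phi z_k$ decrease pointwise, and the monotone convergence theorem gives a limit $z$ with $\Phi z=z$, $0\le z\le x$.

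The main obstacle is to show that $z$ is eventually positive and not identically zero; this is exactly where the alternative hypotheses \eqref{eq:R+P} or \eqref{eq:P>0fortau} come in. I would use a standard ``first zero'' argument. Assume $z$ is positive on the initial interval (arranged by setting $z=x>0$ there), and let $t^*$ be the first point where $z(t^*)=0$. Then every nonnegative term in $\Phi z(t^*)$ vanishes. Under \eqref{eq:R+P}, either some $R_i(t^*)>0$, forcing $z(t^*-r_i)=0$ with $r_i>0$, which contradicts minimality of $t^*$; or some $\bar P_i(t^*)>0$, so by continuity $\bar P_i>0$ near $t^*$ and the tail integral forces $z(s-\tau_i)=0$ for $s$ slightly greater than $t^*$. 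Since $\tau_i>0$, those points lie before $t^*$, again a contradiction. Under \eqref{eq:P>0fortau}, the vanishing of $\int_{t^*}^\infty\bar P_i(s)z(s-\tau_i)\,ds$ together with positivity of $\int_{t^*}^{t^*+\tau_i}\bar P_i$ gives a zero of $z$ in $[t^*-\tau_i,t^*)$, contradicting minimality. Some care is needed at the gluing point $T$ and for the case $r_i=0$, where I would absorb $R_i x(t)$ into the left-hand side, using $\sum_i R_i<1$ when that term matters. With these points handled, $z>0$ on $[T,\infty)$ is a nonoscillatory solution of \eqref{eq:constant_delays}, which contradicts the assumption that every solution of \eqref{eq:constant_delays} oscillates.
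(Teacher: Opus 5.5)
Your construction is the paper's: the same $y$, the same integrated inequality for $x$, a monotone iteration from the top (your $z_k$ are the paper's $u_kx$ with $u_0\equiv 1$), and a first-zero argument. Your \eqref{eq:R+P} branch is right. The gluing you leave open is done in the paper by setting $u$ on $[t_1-D,t_1)$ equal to the linear interpolation between $1$ at $t_1-D$ and $(Fu)(t_1)$ at $t_1$. This keeps $z=ux$ continuous, $z\le x$, and $z>0$ on the initial interval. On $r_j=0$: absorbing $R_jz(t)$ does not rescue anything. If the only positive term in \eqref{eq:R+P} at $t^*$ is such an $R_j(t^*)$, then $R_j(t^*)z(t^*)$ vanishes trivially, and the hypothesis only gives $\sum R_i\le 1$ anyway. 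Both you and the paper tacitly need $r_j>0$ there.

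The step that fails is the \eqref{eq:P>0fortau} branch. At a first zero $t^*$ you only learn $R_j(t^*)=0$, $Q_j=0$ on $[t^*-\tau_j+\delta_j,t^*]$, and $\bar P_i=0$ on $[t^*,t^*+\tau_i)$ for each $i$. You then use ``positivity of $\int_{t^*}^{t^*+\tau_i}\bar P_i$''. But \eqref{eq:P>0fortau} constrains only the sum over $i$ at one common $t$, with backward windows $[t-\tau_i,t]$. To contradict it you need a single $t$ with $[t-\tau_i,t]\subseteq[t^*,t^*+\tau_i]$ for all $i$ simultaneously, and that exists only if all $\tau_i$ coincide.

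Here is a concrete configuration. Take $\tau_1=1$, $\tau_2=2$, $\bar P_2(s)=\max\{t^*-s,0\}$, $\bar P_1(s)=\max\{s-t^*-1,0\}$, $Q\equiv0$, and $R_j(t^*)=0$. For $t\le t^*+1$ the $\bar P_2$-window meets $(-\infty,t^*)$, and for $t>t^*+1$ the $\bar P_1$-window meets $(t^*+1,\infty)$, so \eqref{eq:P>0fortau} holds for every $t$. Meanwhile \eqref{eq:R+P} fails at $t^*$, and every consequence of the first zero listed above is satisfied. So no contradiction is available from those facts.

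The paper disposes of this case with the same one-line assertion, so you are not departing from it, but the step is unjustified when there are several distinct delays. What your argument does prove is the result under the forward-window hypothesis $\sum_{i=1}^{N_p}\int_t^{t+\tau_i}\bar P_i(s)\,ds>0$ for $t\ge t_0+D$. At $t=t^*$ this gives some $i$ and some $s\in(t^*,t^*+\tau_i)$ with $\bar P_i(s)>0$. Hence $z(s-\tau_i)=0$ with $s-\tau_i\in(t^*-\tau_i,t^*)$, which contradicts minimality. For $N_p=1$ this hypothesis is equivalent to \eqref{eq:P>0fortau}, which is why the single-delay result being generalized is unaffected.
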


\begin{proof}
    The proof follows the main ideas of \cite[Theorem 3.4.1]{agarwal2004}. For the sake for completeness, we provide the details here.
    
    The sufficiency part is trivial. We will prove the necessity. Suppose that $x$ is an eventually positive solution of \eqref{eq:ineq}. Define $y$ as in \Cref{c_lem:y_pos} using constant delays. A similar argument to the one presented there shows
    \[
    y'(t)\leq-\sum_{i=1}^{N_p}\bar{P}_i(t)x(t-\tau_i)
    \]
    for sufficiently large $t$. Integrating on $ [t,\infty)$ yields
     \[
     y(t)\geq \int_t^{\infty}\sum_{i=1}^{N_p} \bar{P}_i(s)x(s-\tau_i)\, ds.
     \]
    From the definition of $y$ we obtain that
    \begin{equation}\label{eq:x_large}
        x(t)\geq\sum_{i=1}^{N_r}R_i(t)x(t-r_i)+\sum_{i=1}^{N_q}\int_{t-\tau_i+\delta_i}^t \! \!Q_i(s)x(s-\delta_i)\ ds+\int_{t}^\infty\sum_{i=1}^{N_p}\bar{P}_i(s)x(s-\tau_i)\, ds
    \end{equation}
    for $t\geq t_1$ for a sufficiently large $t_1$. Now, let 
    \[
    E\coloneqq \{u\in C([t_1-D,\infty),\R^+) : 0\leq u(t)\leq 1,\text{ for all } t\geq t_1-D\}.
    \]
    Define the operator $F$ by
    \begin{equation*}
        (Fu)(t)\!=\!
        \left\{\begin{aligned}
             &\frac{1}{x(t)}
            \Bigg[
                \sum_{i=1}^{N_r} R_i(t)u(t-r_i)x(t-r_i) \\
         &\begin{aligned}
           & \hphantom{\frac{1}{x(t)}\Bigg[}+ \sum_{i=1}^{N_q} \int_{t-\tau_i+\delta_i}^t Q_i(s)u(s-\delta_i)x(s-\delta_i)\,ds \\
           & \hphantom{\frac{1}{x(t)}\Bigg[}     +\int_{t}^{\infty}\sum\limits_{i=1}^{N_p} \bar{P}_i(s)u(s-\tau_i)x(s-\tau_i)\,ds
            \Bigg],& & t_1\le t, \\[10pt]
           &  \frac{t-t_1+D}{D}(Fu)(t_1) + 1-\frac{t-t_1+D}{D}, & &t_1-D \le t < t_1.
            \end{aligned}        
        \end{aligned}\right.
    \end{equation*}
    It is easy to see that $F$ maps $E$ into itself. In addition, for all $u \in E$:  $(Fu)(t)>0$,  for all $t\in [t_1-D,t_1)$.
    Define the sequence $(u_k)_{k\in\N}$, where $u_0\equiv 1$, $u_{k+1}=Fu_k$. By mathematical induction we infer from \eqref{eq:x_large} that $0\leq u_{k+1}\leq u_k\leq1$, for all $k\in\N $. Defining $u(t)\coloneqq \lim_{k\to\infty}u_k(t) $, and using Lebesgue's dominated convergence theorem, we obtain that for $t\geq t_1$
   \begin{align*}
    u(t)
        &= \frac{1}{x(t)}\Bigg[
            \sum_{i=1}^{N_r} R_i(t)\,u(t-r_i)\,x(t-r_i)\\
        &\mathrel{\hphantom{=}}\hphantom{\frac{1}{x(t)}\Bigg[}    
            + \sum_{i=1}^{N_q} \int_{t-\tau_i+\delta_i}^{t}
                Q_i(s)\,u(s-\delta_i)\,x(s-\delta_i)\,ds \\
        &\mathrel{\hphantom{=}}\hphantom{\frac{1}{x(t)}\Bigg[}
            + \int_{t}^{\infty}
                \sum_{i=1}^{N_p} \bar{P}_i(s)\,u(s-\tau_i)\,x(s-\tau_i)\,ds
        \Bigg],
    \end{align*}
    and
    \[u(t)=\frac{t-t_1+D}{D}u(t_1)+1-\frac{t-t_1+D}{D}>0\]
    for $t_1-D\leq t<t_1$. Define $w=ux$, then $w(t)>0$ for $t_1-D\leq t<t_1$, and 
    \begin{equation}\label{w}
    	\begin{aligned}
        w(t)&=\sum_{i=1}^{N_r}R_i(t)w(t-r_i)+\sum_{i=1}^{N_q}\int_{t-\tau_i+\delta_i}^t Q_i(s)w(s-\delta_i)\, ds\\
        &\mathrel{\hphantom{=}} + \int_t^\infty\sum_{i=1}^{N_p}\bar{P}_i(s)w(s-\tau_i)\, ds
    	\end{aligned}
    \end{equation}
    for $t\geq t_1$. By differentiating we see that $w$ is a nonnegative solution of \eqref{eq:constant_delays}. It is left to show that $w(t)>0$, for all $t\in[t_1-D,\infty)$. Assume that there exists a $t^*\geq t_1$ such that $w(t)>0$  for $t_1-D\leq t <t^*$ and $w(t^*)=0$. Then by \eqref{w} we have
    \begin{align*}
        0=w(t^*)&=\sum_{i=1}^{N_r}R_i(t^*)w(t^*-r_i)+\sum_{i=1}^{N_q}\int_{t^*-\tau_i+\delta_i}^{t^*}Q_i(s)w(s-\delta_i)\,ds\\
        &\mathrel{\hphantom{=}} + \int_{t^*}^\infty\sum_{i=1}^{N_p}\bar{P}_i(s)w(s-\tau_i)\,ds.
    \end{align*}
    In view of the nonnegativity of $w$, $R_i(t^*)=0$ ($1\leq i\leq N_r$),  $Q_i(t)= 0$,  for all  $t\in[t^*-\tau_i+\delta_i,t^*]$ and $1\leq i \leq N_q$, moreover, $\bar{P}_i(t)w(t-\tau_i)=0$  for all $t\geq t^* $, and $1\leq i\leq N_p$, which contradicts either \eqref{eq:R+P} or \eqref{eq:P>0fortau}.
\end{proof}
Now that we have a connection between NDDEs and corresponding inequalities, we are in position to prove our second main result.  For convenience of notation, let 
\[\tau_0 \coloneqq \min\limits_{1\leq i\leq N_p}\tau_i,\quad \text{and}\quad r_0\coloneqq \min\limits_{1\leq i\leq N_r}r_i,\]
and introduce the functions 
\begin{equation}\label{eq:Omega_def}
    \Omega_{i,j}(t)\coloneqq \frac{R_j(t-\tau_i)\bar{P}_i(t)}{\bar{P}_i(t-r_j)}
\end{equation}
for $1\leq j\leq N_r$, $1\leq i\leq N_p$ and for all $t\geq t_0+D$, where it makes sense.

\begin{theorem}\label{thm:agar}
    Assume that $\bar{P}_i>0$ for all $1\leq i\leq N_p$, and that there exists a constant $\omega\in(0,\infty)$, such that
\begin{equation}\label{eq:omega_def}
    \omega\leq\min_{i,j}\left\{\inf_{s\geq t_0+D}\Omega_{i,j}(s) \right\}\quad \text{and} \quad \omega<\frac{1}{N_r}.
\end{equation} 
Furthermore, assume
    \begin{equation}
        \label{R+Q<1}
        \sum_{i=1}^{N_r}R_i(t)+\sum_{i=1}^{N_q}\int_{t-\tau_i+\delta_i}^{t}Q_i(s)\,ds\leq1
    \end{equation}
    for all $t\geq t_0+D$, $1\leq i\leq N_p$. If there exists $m\in\N$ such that
    \begin{align*}\label{eq:inf1}\tag{$A_3$}
        \frac{(N_r\omega)^m}{1-N_r\omega}\liminf_{t\to\infty}\sum_{i=1}^{N_p}\int_{t-mr_0-\tau_i}^t\bar{P}_i(s)\,ds&>\frac{1}{e},\\
    \shortintertext{or}
    \label{eq:inf2}\tag{$B_3$}
        \frac{(N_r\omega)^m}{1-N_r\omega}\liminf_{t\to\infty}\sum_{i=1}^{N_p}(mr_0+\tau_i)\bar{P}_i(t)&>\frac{1}{e},\\
    \shortintertext{or}
    \label{eq:sup}\tag{$D_3$}
        \frac{(N_r\omega)^m}{1-N_r\omega}\limsup_{t\to\infty}\int_{t-mr_0-\tau_0}^t\sum_{i=1}^{N_p}\bar{P}_i(s)\,ds&>1,
    \end{align*}
    then every solution of equation \eqref{eq:constant_delays} oscillates.
\end{theorem}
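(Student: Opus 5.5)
We argue by contradiction. Suppose $x$ is an eventually positive solution of \eqref{eq:constant_delays}, and define $y$ by \eqref{c_eq:y_def} with the constant delays, so that $c_i=\tau_i-\delta_i$. The hypotheses $\bar P_i>0$ and \eqref{R+Q<1} give \eqref{c_eq:P_bar_weakpos} and \eqref{c_eq:key_ineq}. Then \Cref{c_lem:y_pos} yields $y'(t)=-\sum_{i}\bar P_i(t)x(t-\tau_i)\le 0$ and $0<y(t)\le x(t)$. The plan is to obtain a non-neutral delay differential inequality for $y$ with enlarged delays $mr_0+\tau_i$ and coefficients multiplied by $(N_r\omega)^m/(1-N_r\omega)$. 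We then invoke classical criteria: by \cite[Corollary 3.2.2]{gyori1991}, an eventually positive solution of such an inequality gives an eventually positive solution of the corresponding equation. This contradicts the oscillation guaranteed by \eqref{A_1}, \eqref{B_1}, or \eqref{D_1} (\cite[Corollary~1]{bingtuan1996}, \cite[Corollary~2.6.2]{Erbe1995}, \cite[Remark~2.7.3]{ladde_lak1987}).

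The key step is an iterative lower bound for $x$ in terms of $y$. From the definition of $y$ and the nonnegativity of the integral terms, $x(t)\ge y(t)+\sum_j R_j(t)x(t-r_j)$. Iterating this $m$ times gives
\[
x(t)\ge y(t)+\sum_{j}R_j(t)y(t-r_j)+\dots ,
\]
a sum over words $j_1\dots j_k$ of products of $R$'s evaluated at shifted arguments, times $y$ at shifted arguments. Rather than bounding the products of $R$ directly, we use the quantity $\bar P_i(t)x(t-\tau_i)$ and the ratio $\Omega_{i,j}$. Put $u_i(t):=\bar P_i(t)x(t-\tau_i)$. Then
\[
u_i(t)\ge \bar P_i(t)y(t-\tau_i)+\sum_j \frac{R_j(t-\tau_i)\bar P_i(t)}{\bar P_i(t-r_j)}\,u_i(t-r_j)\ge \bar P_i(t)y(t-\tau_i)+\omega\sum_j u_i(t-r_j).
\]
Iterating $k$ times and using that $y$ is nonincreasing, so that $y(t-\tau_i-\text{(sum of }r\text{'s)})\ge y(t-\tau_i-kr_0)$ fails in the wrong direction, requires care. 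Since $y$ decreases, $y(s)\ge y(t-\tau_i-kD)$ for later $s$, which is the wrong kind of bound. Instead we keep only the tail term, $u_i(t)\ge \omega^m\sum_{|w|=m}u_i(t-r_{w})$, and then bound $u_i(t-r_w)$ below through the relation between $\bar P_i$ and $y$.

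I expect the main obstacle to be producing exactly the factor $(N_r\omega)^m/(1-N_r\omega)$ with delay $mr_0+\tau_i$. My first attempt is as follows. Use $u_i(s)\ge\bar P_i(s)y(s-\tau_i)$ at each of the $N_r^k$ words of length $k\ge m$. Monotonicity of $y$ and $r_w\ge kr_0$ give $y(t-r_w-\tau_i)\ge y(t-\tau_i-kr_0)$. This bound goes in the right direction since the argument $t-r_w-\tau_i$ is the earlier time. Similarly, repeated use of the definition of $\Omega$ converts $\bar P_i(t-r_w)$ into $\bar P_i(t)$ over the product of the $R$'s. Summing the geometric series $\sum_{k\ge m}(N_r\omega)^k=(N_r\omega)^m/(1-N_r\omega)$, convergent since $\omega<1/N_r$, while bounding $y(t-\tau_i-kr_0)\ge y(t-\tau_i-mr_0)$ only for $k=m$, would be inaccurate. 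So instead we bound every shift crudely with the worst shift $mr_0$, accepting that this may need the $\liminf$ hypotheses to absorb it. This yields
\[
y'(t)+\frac{(N_r\omega)^m}{1-N_r\omega}\sum_i\bar P_i(t)\,y(t-mr_0-\tau_i)\le0 .
\]
In case \eqref{eq:sup}, we bound all delays below by $mr_0+\tau_0$ in the same way before applying the $\limsup>1$ criterion. If this direct route fails, the fallback is \Cref{thm:eq-ineq}: replace $x$ by the solution $w$ constructed there and run the same iteration for $w$.
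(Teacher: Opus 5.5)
There is a genuine gap at the step where you ``convert $\bar P_i(t-r_w)$ into $\bar P_i(t)$.'' Your one-step inequality $u_i(t)\ge \bar P_i(t)y(t-\tau_i)+\omega\sum_j u_i(t-r_j)$ is correct. Iterating it, however, produces terms $\omega^k\bar P_i(t-r_w)\,y(t-r_w-\tau_i)$. To reach $\bar P_i(t)\,y(t-mr_0-\tau_i)$ you need a \emph{lower} bound on $\bar P_i(t-r_w)$ in terms of $\bar P_i(t)$.

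The hypothesis $\Omega_{i,j}\ge\omega$ gives only the upper bound $\bar P_i(t-r_j)=R_j(t-\tau_i)\bar P_i(t)/\Omega_{i,j}(t)\le R_j(t-\tau_i)\bar P_i(t)/\omega$. Writing $\bar P_i(t-r_w)$ as $\bar P_i(t)$ times a product of $R$'s over a product of $\Omega$'s just undoes the substitution. It returns you, with an extra factor $\le 1$, to the bare iteration $x(t-\tau_i)\ge\sum_w(\prod R)\,y(\cdot)$, and no lower bound on products of the $R_j$ is assumed. So your target inequality $y'(t)+\frac{(N_r\omega)^m}{1-N_r\omega}\sum_i\bar P_i(t)y(t-mr_0-\tau_i)\le0$ is not established. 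Your route would close only under an extra hypothesis such as each $\bar P_i$ being nonincreasing. The fallback via \Cref{thm:eq-ineq} does not repair this, because the obstruction is the shifted argument of $\bar P_i$, not positivity, and it reappears for $w$.

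The missing idea is to stop iterating at the level of derivatives. Sum your inequality over $i$ and use $\sum_i u_i(s)=-y'(s)$, from \eqref{c_eq:const-delay:y'=}. The $\omega$-terms are then exactly $-\omega\sum_j y'(t-r_j)$, so the shifted coefficients $\bar P_i(t-r_j)$ are absorbed into a derivative and you obtain the neutral inequality
\[
\Bigl[y(t)-\omega\sum_{j=1}^{N_r}y(t-r_j)\Bigr]'+\sum_{i=1}^{N_p}\bar P_i(t)y(t-\tau_i)\le 0 .
\]
The paper then proceeds as follows. It passes, via \Cref{thm:eq-ineq}, to a positive solution $\tilde y$ of the corresponding equation and sets $z=\tilde y-\omega\sum_j\tilde y(\cdot-r_j)$, which is positive and nonincreasing by \Cref{c_lem:y_pos}. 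It then iterates the function-level identity $\tilde y(t)=z(t)+\omega\sum_j\tilde y(t-r_j)$, in which no $\bar P_i$ appears. Dropping the terms with $k<m$ and using monotonicity of $z$ gives $\tilde y(t)\ge\sum_{k=m}^K(N_r\omega)^k z(t-mr_0)$. Hence $z'(t)+\sum_i\tilde p_i(t)z(t-mr_0-\tau_i)\le0$; the non-neutral inequality is for $z$, not for $y$.

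In fact, since $y$ is already positive and nonincreasing, $y-\omega\sum_j y(\cdot-r_j)$ is strictly decreasing with limit $(1-N_r\omega)\lim y\ge0$. It can therefore play the role of $z$ directly, without \Cref{thm:eq-ineq}. From that point on, your remaining ingredients go through as you describe: monotonicity for shifts $\ge mr_0$, the geometric sum, \cite[Corollary 3.2.2]{gyori1991}, and the classical criteria.
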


This theorem generalizes and improves \cite[Theorem~3.4.9]{agarwal2004} which only considered the case $N_r=N_p=N_q=1$ and $m=1$.   Observe that if all the delays are constant, $\bar{P}_i(t)>0$ and $\omega>0$ satisfies \eqref{eq:omega_def}, then \Cref{thm:agar} -- already with the choice $m=0$ -- gives more efficient conditions than \Cref{cor:I} (cf.~\Cref{ex:agarwal-type-is-stronger,ex:agarwal-type-is-less-general}). 
\begin{proof} The proof follows a similar argument as that of \cite[Theorem 3.4.9]{agarwal2004}.

    Let $m\in\N$ be such that one of inequalities \eqref{eq:inf1}, \eqref{eq:inf2} and \eqref{eq:sup} holds. For the sake of contradiction, suppose that \eqref{eq:constant_delays} has an eventually positive solution $x$.
    Let $y$ be defined by \eqref{c_eq:y_def} -- recall that here $c_i = \tau_i-\delta_i$, with $\delta_i=0$ for $N_q< i\leq N_p$. Just as in \Cref{c_lem:y_pos}, we obtain 
    \begin{equation}\label{c_eq:const-delay:y'=}
    y'(t)+\sum_{i=1}^{N_p}\bar{P}_i(t)x(t-\tau_i)=0.
    \end{equation}
    Applying \eqref{c_eq:y_def} for $x(t-\tau_i)$ in the above equality we arrive at
    \begin{align*}
        y'(t)+\sum_{i=1}^{N_p}\bar{P}_i(t)y(t-\tau_i)
        &+ \sum\limits_{i=1}^{N_p}\sum_{j=1}^{N_r}\bar{P}_i(t)R_j(t-\tau_i)x(t-\tau_i-r_j) \notag \\
        &+ \sum_{i=1}^{N_p}\sum_{j=1}^{N_q}\bar{P}_i(t)\int_{t-\tau_j+\delta_j}^{t}Q_j(s-\tau_i)x(s-\tau_i-\delta_j)=0.
    \end{align*}
    Omitting the last term and using the definition of $\Omega_{i,j}$ we obtain the inequality
    \[
    y'(t)+\sum_{i=1}^{N_p}\bar{P}_i(t)y(t-\tau_i)
    + \sum_{i=1}^{N_p}\sum_{j=1}^{N_r}\Omega_{i,j}(t)\bar{P}_i(t-r_j)x(t-\tau_i-r_j)\leq 0.
    \]
    Using the definition of $\omega$, we have
    \[
    y'(t)+\sum_{i=1}^{N_p}\bar{P}_i(t)y(t-\tau_i)
    + \omega\sum_{i=1}^{N_p}\sum_{j=1}^{N_r}\bar{P}_i(t-r_j)x(t-\tau_i-r_j)\leq 0,
    \]
    and from equation \eqref{c_eq:const-delay:y'=} it is easy to see that
    \[\sum\limits_{j=1}^{N_r}y'(t-r_j)=-\sum\limits_{j=1}^{N_r}\sum\limits_{i=1}^{N_p}\bar{P}_i(t-r_j)x(t-\tau_i-r_j).\]
    Combining these we obtain
    \[
    \left[y(t)-\omega\sum_{i=1}^{N_r}y(t-r_i)\right]'+\sum_{i=1}^{N_p}\bar{P}_i(t)y(t-\tau_i)\leq 0.
    \]
    
    Now, \Cref{thm:eq-ineq} implies the existence of an eventually positive function $\tilde{y}$ that satisfies
    \begin{equation}\label{eq:reduced_neutral}
        \left[\tilde{y}(t)-\omega\sum_{i=1}^{N_r}\tilde{y}(t-r_i)\right]'+\sum_{i=1}^{N_p}\bar{P}_i(t)\tilde{y}(t-\tau_i)=0.
    \end{equation}
    Let us introduce
    \[z(t)\coloneqq \tilde{y}(t)-\omega\sum\limits_{i=1}^{N_r}\tilde{y}(t-r_i),\]
    from which we have
    \[
        \tilde{y}(t)=z(t)+\omega\sum_{i=1}^{N_r}\tilde{y}(t-r_i).
    \]
    By iteration, we obtain 
    \begin{align*}
        \tilde{y}(t)=&z(t)+\omega\sum_{i_1=1}^{N_r}z(t-r_{i_1})+\omega^2\sum_{i_1=1,i_2=1}^{N_r}z(t-r_{i_1}-r_{i_2})+\dots \\ 
        &+\omega^K\sum_{i_1=1,\dots,i_K=1}^{N_r}z\left(t-\sum_{l=1}^{K}r_{i_l} \right)+ \omega^{K+1}\sum_{i_1=1,\dots,i_{K+1}=1}^{N_r}\tilde{y}\left(t-\sum_{l=1}^{K+1}r_{i_l}\right),
    \end{align*}
    where $K>m$  and $t$ is sufficiently large. By virtue of \Cref{c_lem:y_pos} we have that $z'(t)\leq 0$ and $z(t)>0$. Using the monotonicity of $z$ and the positivity of $\tilde{y}$, we obtain the inequality
    \[
        \tilde{y}(t)\geq \sum_{i=0}^K (N_r\omega)^i z(t-ir_0).
    \]
    Now, omitting the first $m$ terms, and using the monotonicity again,
    and $N_r\omega\in(0,1)$, we infer
    \[
        \tilde{y}(t)\geq (N_r\omega)^m\frac{1-(N_r\omega)^{K-m+1}}{1-N_r\omega}z(t-mr_0).
    \]
    Combining this with equation \eqref{eq:reduced_neutral} we conclude that $z(t)$ is an eventually positive solution of the differential inequality
    \[
        z'(t)+\sum_{i=1}^{N_p}\tilde{p}_i(t) z(t-mr_0-\tau_i)\leq0,
    \]
    where 
    \[
    \tilde{p}_i(t)\coloneqq (N_r\omega)^m\frac{1-(N_r\omega)^{K-m+1}}{1-N_r\omega}\bar{P}_i(t).
    \]
    Applying \cite[Corollary 3.2.2]{gyori1991} we obtain that the differential equation
    \begin{equation}\label{eq:fineq}
        v'(t)+\sum_{i=1}^{N_p}\tilde{p}_i(t)v(t-mr_0-\tau_i)=0
    \end{equation}
    also has an eventually positive solution.
    On the one hand, for large enough $K$, \eqref{eq:inf1}, \eqref{eq:inf2} and \eqref{eq:sup} imply 
    \begin{align}\label{eq:fininf1}
        \liminf_{t\to\infty}\sum_{i=1}^{N_p}\int_{t-mr_0-\tau_i}^t \tilde{p}_i(t)\,ds&>\frac{1}{e},\\
        \label{eq:fininf2}
        \liminf_{t\to\infty}\sum_{i=1}^{N_p}(mr_0+\tau_i)\tilde{p}_i(t)&>\frac{1}{e},\\
    \shortintertext{and}
    \label{eq:finsup}
        \limsup_{t\to\infty}\int_{t-mr_0-\tau_0}^t\sum_{i=1}^{N_p}\tilde{p}_i(t)\,ds&>1,
    \end{align}
    respectively.
    On the other hand, it is well-known, that each of the inequalities \eqref{eq:fininf1}, \eqref{eq:fininf2} and \eqref{eq:finsup} is sufficient for the oscillation of all solutions of equation \eqref{eq:fineq} (see \cite[Corollary 1]{bingtuan1996}, \cite[Corollary 2.6.2]{Erbe1995} and \cite[Remark 2.7.3]{ladde_lak1987}, respectively). This is a contradiction that completes our proof.
\end{proof}

It is worth noting that the conditions in the previous theorem can be significantly sharpened by replacing the limit inferior with limit superior, provided the coefficient functions fulfill an extra condition related to the slowly varying property. We conclude our results by formulating this in the next corollary.

\begin{corollary}\label{cor:slow-osc}
    Assume that inequalities \eqref{eq:omega_def}--\eqref{R+Q<1} hold, the functions $\bar{P}_i$ are bounded,  uniformly continuous, and satisfy $\liminf_{t\to\infty}\int_{t-\tau_i}^t\bar{P}_i(s)\,ds>0 $ for all $1\leq i\leq N_p$. For $m\in\N$, define
    \begin{align*}
        A_m(t)&\coloneqq \frac{(N_r\omega)^m}{1-N_r\omega}\sum_{i=1}^{N_p}\int_{t-mr_0-\tau_i}^{t}\bar{P}_i(s)\,ds,\\
        \shortintertext{and}
            B_m(t)&\coloneqq \frac{(N_r\omega)^m}{1-N_r\omega}\sum_{i=1}^{N_p}\bar{P}_i(t)(mr_0+\tau_i).
    \end{align*}
    If one of the following conditions is fulfilled, then every solution of equation \eqref{eq:constant_delays} oscillates:
    \[
        \text{$A_m$ is slowly varying at infinity, and}\quad \limsup_{t\to\infty}A_m(t)>\frac{1}{e} \quad \text{for some $m\in\N$};
    \]
    \[
        \text{$B_m$ is slowly varying at infinity, and}\quad \limsup_{t\to\infty}B_m(t)>\frac{1}{e} \quad \text{for some $m\in\N$}.
    \]
\end{corollary}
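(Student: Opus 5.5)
We follow the proof of \Cref{thm:agar} verbatim up to the point where the non-neutral inequality for $z$ is obtained, and then replace the classical $\liminf$-type oscillation criteria by the slowly varying criteria of \cite[Theorem 2.2]{abel2020}, exactly as \Cref{cor:slow} replaces \Cref{cor:I}.

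Suppose, for contradiction, that \eqref{eq:constant_delays} has an eventually positive solution $x$, and fix $m\in\N$ for which the condition on $A_m$ (resp.\ $B_m$) holds. As in the proof of \Cref{thm:agar}, using \eqref{eq:omega_def}, \eqref{R+Q<1}, \Cref{c_lem:y_pos} and \Cref{thm:eq-ineq}, we obtain a $K>m$ (to be chosen large) and an eventually positive solution $z$ of
\[
z'(t)+\sum_{i=1}^{N_p}\tilde p_i(t)\,z(t-mr_0-\tau_i)\le 0,\qquad \tilde p_i=\kappa_K\bar P_i,\quad \kappa_K\coloneqq (N_r\omega)^m\frac{1-(N_r\omega)^{K-m+1}}{1-N_r\omega}.
\]
By \cite[Corollary 3.2.2]{gyori1991}, the equation \eqref{eq:fineq} then also has an eventually positive solution. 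It therefore suffices to show that \eqref{eq:fineq} is oscillatory for $K$ large.

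To this end we apply \cite[Theorem 2.2 (a)/(b)]{abel2020} to \eqref{eq:fineq}, which has constant delays $\sigma_i\coloneqq mr_0+\tau_i>0$.
\begin{enumerate}[label=\textnormal{(\roman*)}]
\item The coefficients $\tilde p_i$ are positive constant multiples of $\bar P_i$, so they are bounded and uniformly continuous. The constant delays trivially have these properties.
\item The nondegeneracy condition holds: since $\sigma_i\ge\tau_i$ and $\bar P_i>0$,
\[
\liminf_{t\to\infty}\int_{t-\sigma_i}^t\tilde p_i\ge \kappa_K\liminf_{t\to\infty}\int_{t-\tau_i}^t\bar P_i>0.
\]
\item The relevant quantities are scalar multiples of the given ones:
\[
\sum_i\int_{t-\sigma_i}^t\tilde p_i=\kappa_K(1-N_r\omega)(N_r\omega)^{-m}A_m(t)=\bigl(1-(N_r\omega)^{K-m+1}\bigr)A_m(t),
\]
and analogously $\sum_i\tilde p_i\sigma_i=\bigl(1-(N_r\omega)^{K-m+1}\bigr)B_m(t)$. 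A constant multiple of a slowly varying function is slowly varying, so these expressions are slowly varying at infinity.
\end{enumerate}
Since $N_r\omega\in(0,1)$, the factor $1-(N_r\omega)^{K-m+1}$ tends to $1$ as $K\to\infty$. Because $\limsup A_m>1/e$ (resp.\ $\limsup B_m>1/e$) with strict inequality, we can choose $K$ so large that the corresponding $\limsup$ for \eqref{eq:fineq} still exceeds $1/e$. Then \eqref{A_2} (resp.\ \eqref{B_2}) holds for \eqref{eq:fineq}, so by \cite{abel2020} every solution of \eqref{eq:fineq} oscillates. This contradicts the existence of an eventually positive solution.

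The main obstacle is checking that the hypotheses of \cite[Theorem 2.2]{abel2020} transfer exactly, in particular that the nondegeneracy and uniform-continuity assumptions are required of the coefficients of the reduced equation with the enlarged delays $mr_0+\tau_i$. A second point to verify is that the dependence on $K$ enters only through a constant factor. This ensures the slowly varying property is unaffected and the strict inequality absorbs the limit $K\to\infty$.
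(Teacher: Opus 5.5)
Your proposal is correct and follows the paper's own proof. The paper reruns the argument of \Cref{thm:agar} up to the reduced equation \eqref{eq:fineq}, then invokes \cite[Theorem 2.2 (a)--(b)]{abel2020} in place of the $\liminf$-type criteria. Your checks fill in the details the paper leaves implicit: bounded, uniformly continuous coefficients; nondegeneracy for the enlarged delays $mr_0+\tau_i$, using the positivity of $\bar P_i$ inherited from \Cref{thm:agar}; slow variation surviving the constant factor $1-(N_r\omega)^{K-m+1}$; and choosing $K$ large so the $\limsup$ stays above $1/e$.
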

\begin{proof}
    The indirect proof proceeds in a similar way as in \Cref{thm:agar}. The only difference is that we  apply \cite[Theorem 2.2 (a)--(b)]{abel2020} at the end to arrive at a contradiction.
\end{proof}

\section{Examples}\label{sec:examples}
In this section we give several examples illustrating the applicability of our results.

In our first example, \Cref{c_thm:ndde-rdde} can be applied directly.
\begin{example}\label{ex-variable-delays}
    Consider the equation
    \begin{equation*}\label{eq:pl5}
    \biggl[x(t)-\frac{1}{4}x\Bigr(t-\frac{1}{2}\Bigr)\biggr]'
    +2x\biggl(t-\Bigl(\frac{1}{2}\cos t+1+e^{-t+1+\frac{1}{2}\cos t}\Bigr)\biggr)
    -e^{-t}x\bigl(t-e^{-t}\bigr)=0
    \end{equation*}
    for $t\geq 0$. Here we take $N_r=N_p=N_q=1$, $R_1(t)=\frac{1}{4}$, $P_1(t)=2$, $Q_1(t)=e^{-t}$, $r_1(t)=\frac{1}{2}$, $\tau_1(t)=\frac{1}{2}\cos t+1+e^{-t+1+\frac{1}{2}\cos t}$, and $\delta_1(t)=e^{-t}$. The associated implicit relation yields the explicit solution $c_1(t)=\tfrac{1}{2}\cos t+1$. One can show that $\bar{P}_1(t)=2-e^{c_1(t)-t}(1-c_1'(t))$, which tends to $2$ as $t\to\infty$. Hence, there exists $T_0>0$ such that $\bar{P}_1(t)\ge 1$ for all $t\ge T_0$. Applying Corollary~\ref{cor:I}, we obtain
    \[ \liminf_{t\to\infty}\int_{t-\tau_1(t)}^{t}\bar{P}_1(s)\,ds \ge \liminf_{t\to\infty}\tau_1(t) \ge \liminf_{t\to\infty}c_1(t) = \frac{1}{2} > \frac{1}{e},
    \]
    since $c_1(t)\le\tau_1(t)$. Therefore, every solution oscillates.
\end{example}
 
 The next example illustrates the applicability of \Cref{thm:agar} in case of constant delays, and the importance of the appropriate choice of $m$.
\begin{example}
    Consider the NDDE
    \begin{equation*}\label{eq:pl1}
        \bigg[x(t)-\frac{1}{2}x\Bigl(t-\frac{\pi}{2}\Bigr)\bigg]'+\bigg(\frac{3}{4}\sin(4t)+\frac{5}{4}\bigg)x(t-0.1)+\frac{1}{10}x(t-0.01)-\frac{1}{4}x(t-0.05)=0
    \end{equation*}
    for $t\geq 0$. This equation is of the form \eqref{eq:constant_delays} with $N_r=N_q=1$, $N_p=2$, 
    $R_1(t)=\frac{1}{2},\ P_1(t)=\frac{3}{4}\sin(4t)+\frac{5}{4}$, $P_2(t)= \frac{1}{10}$, $Q_1(t)=\frac{1}{4}$, $r_1=\frac{\pi}{2}$, $\tau_1=0.1$, $\tau_2=0.01$ and $ \delta_1=0.05 $, from which we obtain
    $\bar{P}_1(t)=\frac{3}{4}\sin(4t)+1,\ \bar{P}_2(t)=\frac{1}{10}$ and $\Omega_{1,1}(t)= \Omega_{2,1}(t)=\frac{1}{2}$. A suitable choice for $\omega$ is $\frac{1}{2}$.
    Then the left-hand side of condition \eqref{eq:inf2} reads
\[
2^{1-m} \bigg[\Bigl(\frac{m\pi}{2}+0.1\Bigr)\frac{1}{4}+\Bigl(\frac{m\pi}{2}+0.01\Bigr)\frac{1}{10}\bigg].
\]
Choosing $m=2$, we have
\[
\frac{1}{2} \bigg(\frac{\pi+0.1}{4}+\frac{\pi+0.01}{10}\bigg) \approx 0.563 > \frac{1}{e}.
\]
Thus, condition \eqref{eq:inf2} is satisfied, and hence every solution oscillates. 
We note that one could also choose $m=1$ or $m=3$ to draw the same conclusion, 
while other values of $m$ do not allow the oscillatory behavior to be determined via \eqref{eq:inf2}. 
For comparison, condition \eqref{eq:sup} provides a sufficient condition for oscillation with $m\in \{1,2,3\}$, as well, while \eqref{eq:inf1} guarantees oscillation if we set $1\leq m\leq 5$.
\end{example}
\medskip

In the following example we take advantage of the slowly varying property of the coefficient functions.

\begin{example} Consider the equation
    \begin{align*}\label{eq:pl2}
        &\bigg[x(t)-\frac{1}{2}x(t-0.15)\bigg]'+1.1x(t-0.05)\\
                &\qquad +(2\cos(\ln t)+2.05)x(t-0.01)-x(t-0.02)=0,
    \end{align*}
    for $t\geq 1$. This NDDE is of the form \eqref{eq:constant_delays} with $N_r=N_q=1$, $N_p=2$, 
    $R_1(t)= \frac{1}{2}$, \mbox{$P_1(t)= 1.1$}, $P_2(t)=\bar{P}_2(t)=2\cos(\ln t)+2.05$, 
    $\ Q_1(t)= 1$, $r_1=0.15,\ \tau_1=0.05,\ \tau_2=0.01,\ \delta_1=0.02,$ $\bar{P}_1(t)= 0.1$. Furthermore, 
    $\Omega_{1,1}(t)=\frac{1}{2}$ and  $\Omega_{2,1}(t)
     \to \frac{1}{2} $ as $t\to\infty$, therefore we can choose $\omega$ to be smaller than, but arbitrarily close to $\frac{1}{2}$.
    Even with the choice $\omega=\frac{1}{2}$, inequality \eqref{eq:inf2} is not fulfilled for any $m\in \N$, as
    \begin{multline*}
         \frac{(\frac{1}{2})^m}{1-\frac{1}{2}}\liminf_{t\to\infty} \bigg[0.1(0.15m+0.05)+\left(2\cos(\ln(t))+2.05 \right)(0.15m+0.01) \bigg]\\
        =2^{1-m}(0.0225m+0.0055) < \frac{1}{e}\quad \text{for all } m\in\N.
    \end{multline*}
    Notice that thanks to the slow variance of the functions $\bar{P}_1$ and $\bar{P}_2$, conditions \eqref{eq:inf1} and \eqref{eq:inf2} are equivalent, so \eqref{eq:inf1} cannot guarantee the oscillation of all solutions, either.
    
    On the other hand, these functions have positive lower bounds, so we may try to apply \Cref{cor:slow-osc}. With $\omega=\frac {1}{2}$ and $m=1$, one would have
    \begin{equation*}
        \limsup_{t\to\infty}B_1(t) =0.668>\frac 1e
    \end{equation*}
    Hence, by choosing $\omega$ slightly below $\frac{1}{2}$, we may conclude that every solution oscillates. The same conclusion follows if one chooses $m=2$ or $m=3$. However, a similar simple calculation shows that $\limsup_{t\to \infty} B_m(t)<1$ for all $m\in\N$ and all $\omega<\frac{1}{2}$, thus \eqref{eq:sup} is not satisfied, and \Cref{thm:agar}  cannot be applied here.
\end{example}
\medskip

Finally, one might wonder whether \Cref{cor:I} or \Cref{thm:agar} is stronger for constant delays, but there is no clear answer to that. The following two examples will illustrate this fact.
\begin{example} \label{ex:agarwal-type-is-less-general} Consider the equation 
    \begin{equation*}\label{eq:pl3}
        \bigg[x(t)-e^{-t} x(t-2\pi)\bigg]'+3x(t-1)+(\sin t+1.5)x(t-2)-x\biggl(t-\frac{1}{2}\biggr)=0,\qquad t\geq 1
    \end{equation*}
    with constant delays. This equation is in the form of \eqref{eq:constant_delays} with $N_r=N_q=1$, $N_p=2$,
    $R_1(t)=e^{-t},\ P_1(t)= 3,\ P_2(t)=\sin(t)+1.5$, $ Q_1(t) = 1$, 
    $r_1=2\pi,\ \tau_1=1,\ \tau_2=2,\ \delta_1=\frac{1}{2}$, from which  $\bar{P}_1(t)=2,\ \bar{P}_2(t)=\sin(t)+1.5$. On the one hand,  $\Omega_{1,1}(t)=\Omega_{2,1}(t)=e^{-t}\to0 $  as $t$ goes to infinity, so we cannot apply \Cref{thm:agar}. On the other hand, if we let $\PP_i=\bar{P}_i$ and $\cc_i=\tau_i-\delta_i$ $(i=1,2)$, then all the conditions of \Cref{cor:I} are satisfied. Hence, the left-hand side of the inequality \eqref{A_1} takes the following form:
    \[
        \liminf_{t\to\infty}\bigg(\int_{t-1}^t2\,ds + \int_{t-2}^{t}\sin(s)+1.5\,ds\bigg)>3,
    \]
    which is clearly larger than $\frac{1}{e}$, hence \Cref{cor:I} implies that every solution oscillates.
\end{example}

\begin{example}\label{ex:agarwal-type-is-stronger}
Let us consider the NDDE
    \begin{equation*}\label{eq:pl4}
        \bigg[x(t)-\frac{1}{3}x(t-2)\bigg]'+(\cos2t+2)x\biggl(t-\frac{1}{2}\biggr)-\biggl[\cos\biggl(2t+\frac{1}{2}\biggr)+\frac{3}{2}\biggr]x\biggl(t-\frac{1}{4}\biggr)=0,
    \end{equation*}
    which is of the form \eqref{eq:constant_delays} with $N_r=N_p=N_q=1$, $R_1(t)=\frac{1}{3},\ P_1(t)=\cos2t+2,\ Q_1(t)=\cos(2t+\frac{1}{2})+\frac{3}{2},$\ 
    $r_1=2,\ \tau_1=\frac{1}{2},\ \delta_1=\frac{1}{4},\ c_1=\frac{1}{4} ,\ \bar{P}_1(t)=\frac{1}{2},\ \Omega_{1,1}(t)=\frac{1}{3}$, thus $\omega\coloneqq \frac{1}{3}$.
    Substituting into the left-hand side of any of the conditions in \eqref{A_1}--\eqref{D_1} in \cref{cor:I},  we obtain $\frac{1}{4}$, which is smaller than $\frac{1}{e}$. Thus, \Cref{cor:I} does not imply oscillatory behavior. However,  condition \eqref{eq:inf1} in \Cref{thm:agar} is fulfilled with $m=2$ (it boils down to $0.375>\frac{1}{e}\approx 0.368$), thus \Cref{thm:agar} implies that every solution oscillates.
\end{example}

\section*{Acknowledgment}
This research was supported by the National Research, Development and Innovation (NRDI) Fund, Hungary, [project no.\ FK~142891] and the National Laboratory for Health Security [RRF-2.3.1-21-2022-00006].  Á.\;G.\ was supported by the János Bolyai Research Scholarship of the Hungarian Academy of Sciences.

\end{document}